\documentclass[preprint,10pt]{elsarticle}
\usepackage{amsfonts,amssymb,amsmath}
\usepackage{graphicx,color}
\usepackage{latexsym}
\usepackage{amsthm}
\usepackage{graphics}
\usepackage{subfigure}
\usepackage{epstopdf}
\usepackage{natbib}
\usepackage{float}
\usepackage{bm}
\usepackage{indentfirst}
\usepackage{algorithm}
\usepackage{algorithmic}
\usepackage{xcolor}
\allowdisplaybreaks[4]

\journal{Computers \& Mathematics with Application}

\newtheorem{theorem}{Theorem}

\begin{document}
\begin{frontmatter}

\title{A well-balanced positivity-preserving discontinuous Galerkin method for shallow water models with variable density}
\author[a]{Jun She}
\ead{shejun1014@163.com}
\author[a]{Haiyun Dong}
\ead{dhy@cqu.edu.cn}
\author[b]{Maojun Li}
\ead{limj@uestc.edu.cn}
\author[c]{Jianjun Ma\corref{cor1}}
\ead{majianjun425@163.com}

\address[a]{College of Mathematics and Statistics, Chongqing University, Chongqing, 401331, P.R. China }
\address[b]{School of arts and sciences, Chengdu College of University of Electronic Science and Technology of China, Sichuan, 611731, P.R. China}
\address[c]{College of Finance and Economics, Sichuan International Studies University, Chongqing 400031, P.R. China}

\cortext[cor1]{Corresponding author.}

\begin{abstract}
In this paper, we present a numerical scheme designed for coupled systems of variable-topography shallow water flow and solute transport. By integrating a variable-density system with an expression for relative density of mixtures, a novel formulation of the coupled system is derived. To ensure the well-balanced property, auxiliary variables are introduced to reformulate the variable-density shallow water equations into a new form, which is then discretized using the discontinuous Galerkin (DG) method with the Lax-Friedrichs (LF) flux as the numerical flux. By selecting appropriate values for the auxiliary variables, we demonstrate that the proposed method accurately preserves steady-state solutions under still water conditions, thereby verifying its well-balanced nature. Furthermore, sufficient conditions for preserving the positivity of both water depth and concentration are proposed and rigorously proven. A positivity-preserving limiter is introduced to enforce these conditions. Finally, a series of numerical examples are conducted to validate the computational accuracy and effectiveness of the proposed method.

\end{abstract}

\begin{keyword}
Shallow water flows, Variable density, Discontinuous Galerkin method, Well-balanced property, Positivity-preserving schemes.
\end{keyword}

\end{frontmatter}

\section{Introduction}\label{sec:intro}
Coupled systems of shallow water equations and solute transport models are commonly used to simulate the dynamic characteristics of water flow and pollutant transport in river and coastal zone environments\cite{Luis2012,Faranak2018}. Numerical simulation of this coupled model serves as a critical approach for assessing surface water pollution risks and finds wide application in environmental management and conservation practices\cite{Hu2020,Pablo2018}. Under overland flow conditions, free-surface shallow water flows often transport debris and suspended sediments. When such runoff discharges into larger water bodies like rivers or lakes, the notable density difference between the effluent and receiving water bodies must be considered in numerical modeling to improve the predictive accuracy of flow dynamics and mixing processes.

The development of numerical solutions for coupled models of shallow water equations and variable-density solute transport presents a significant challenge. This task demands robust numerical algorithms that ensure the scheme is well-balanced and maintains the positivity of certain key physical parameters. The well-balanced property calls for accurately preserving steady-state solutions and avoiding spurious oscillations caused by imbalanced discretization of source and flux terms. Positivity preservation requires that water depth, variable density, and scalar concentration remain non-negative throughout the entire computation. Although numerous studies have focused on source and flux balancing techniques at the discrete level\cite{Vázquez-Cendón1999,Thierry2003,Abdelaziz2016}, effectively applying these numerical methods under complex conditions while maintaining both numerical stability and physical consistency remains a considerable challenge.

Numerous studies in the literature have addressed numerical schemes for solving the variable-density shallow water equations. In 2017, Sepideh Khorshid et al.\cite{Khorshid2017} extended the central-upwind scheme for the variable-density shallow water equations to triangular meshes. The scheme demonstrates good balance and positivity, achieves high computational efficiency while maintaining accuracy, preserves two types of still-water steady states, and remains oscillation-free in regions with minor density variations. In 2020, Ernesto Guerrero Fernández et al.\cite{Guerrero Fernández2020}developed a second-order balanced finite volume scheme for variable-density multi-layer shallow water models, which integrates hydrostatic reconstruction and MUSCL second-order reconstruction to maintain balance for still-water steady-state solutions and preserve water depth positivity. In 2021, Amine Hanini et al.\cite{Hanini2021} proposed a well-balanced and positivity-preserving numerical scheme for variable-density shallow water models by enhancing the central-upwind scheme on triangular meshes to solve the coupled shallow water flow and scalar transport system, and demonstrated its balance and positivity properties. In 2022, Mohammad A. Ghazizadeh et al.\cite{Ghazizadeh2022} proposed an adaptive central-upwind scheme on quadtree grids for variable-density shallow water equations. This method accurately preserves the still-water steady state, utilizes continuous piecewise bilinear interpolation for bottom topography to achieve high-order spatial accuracy, and ensures positive water depth values.

The DG method is a powerful discretization technique for solving partial differential equations. It was first proposed in 1973 by Reed and Hill \cite{Reed1973} for solving neutron transport equations. The primary development of the DG method was accomplished by Cockburn and colleagues\cite{Cockburn1990,Cockburn1989-1,Cockburn1989-2,Cockburn1991,Cockburn1998-1}, then it has also evolved to include variants such as the local discontinuous Galerkin (LDG) method and the central discontinuous Galerkin (CDG) method. Numerous scholars have applied these methods to solve nonlinear shallow water wave equations and have designed a series of structure-preserving numerical schemes. Many scholars have applied this method to solve nonlinear shallow water wave equations and have designed a series of structure-preserving numerical schemes. For details, see \cite{Li2014,Li2017,Guerrero Fernández2022,Li2022,Zhang2023,Ersing2024,Bunya2009,Xing2010,Xian2021}.

In this paper, we propose a well-balanced and positivity-preserving DG method for simulating coupled systems of shallow water equations and solute transport models. The method is based on the fully coupled system introduced in \cite{Javier2012}. Referring to the methodological approach in \cite{Li2022}, we introduce an auxiliary parameter, and after performing an equivalent reconstruction of the coupled system, we employ the DG method based on the Lax-Friedrichs flux for numerical discretization. By selecting appropriate values for the auxiliary variable, the balance property of the scheme is proven by extending the reasoning approach of \cite{Li2022}. Additionally, sufficient conditions for positivity preservation are provided and verified. A positivity-preserving limiter is then introduced to ensure that both water depth and concentration satisfy these sufficient conditions.

The remainder of this paper is organized as follows: Section \ref{sec:varia} provides a brief introduction to the coupled system of shallow water equations and scalar transport model. Section \ref{sec:fully} presents the fully discrete DG numerical scheme for solving this system. The well-balanced properties of the proposed scheme are discussed in detail in Section \ref{sec:wellb}, followed by an analysis and proof of its positivity-preserving characteristics for water depth, mixture density, and scalar concentration in Section \ref{sec:posit}. The efficacy of the scheme is then validated through several numerical examples in Section \ref{sec:numer}. Section \ref{sec:concl} presents the conclusions of the study.

\section{Variable density shallow water system}\label{sec:varia}

In this paper, we consider the following coupled system of the shallow water equations and a solute transport model \cite{Xian2021}:
\begin{equation} \label{eq:1}
\left\{
\begin{aligned}
&\partial_t(hr)+\partial_x(hur)+\partial_y(hvr)=0, \\
&\partial_t(hur)+\partial_x\left(hu^2r+\frac{1}{2}gh^2r\right)+\partial_y(huvr)=-ghr\partial_xZ, \\
&\partial_t(hvr)+\partial_x(huvr)+\partial_y\left(hv^2r+\frac{1}{2}gh^2r\right)=-ghr\partial_yZ, \\
&\partial_t(hc_i)+\partial_x(huc_i)+\partial_y(hvc_i)=0,
\end{aligned}
\right.
\end{equation}
where \( h \) is the depth of the mixture, \( u \) and \( v \) are the flow velocities of the mixture in the \( x \)- and \( y \)- directions, respectively, \( Z \) represents the bottom topography, \( c_i \) is the scalar volume concentration of the \( i \)-th mixture (\( i=1,\dots,N \)), and \( g \) denotes the gravitational acceleration, \( t \) is the time.

The relative density of the mixture with respect to clear water is defined as follows:
\begin{equation} \label{eq:2}
	r=1+\sum_{i=1}^N\Delta_ic_i,
\end{equation}
where \( \Delta_i=(\rho_i-\rho_w)/\rho_w \) is the relative density of the \( i \)-th mixture, \( \rho_i \) denotes its density, \( \rho_w \) is the density of clear water, and \( \rho_wr \) represents the density of the mixture.

To facilitate the subsequent construction of the numerical scheme, we multiply both sides of  (\ref{eq:2}) by the water depth \( h \) and take the partial derivative with respect to time \( t \), obtaining:

$$\begin{aligned}\partial_t(hr)=\partial_t(h)+\sum_{i=1}^{N}\Delta_i\partial_t(hc_i).\end{aligned}$$

Then, by substituting the relevant equations from (\ref{eq:1}) into the above equation and simplifying, we obtain:

$$\begin{aligned} \partial_{t}(h) & +\partial_x(hu)+\partial_y(hv)=0. \end{aligned}$$

Since the bottom topography \( Z \) is time-independent, we can write it in the following form:
$$\begin{aligned} \partial_{t}(h+Z) & +\partial_x(hu)+\partial_y(hv)=0. \end{aligned}$$

Then, we reformulate the coupled system as:

\begin{equation} \label{eq:3}
\left\{
\begin{aligned}
&\partial_t(h+Z)+\partial_x(hu)+\partial_y(hv)=0, \\
&\partial_t(hr)+\partial_x(hur)+\partial_y(hvr)=0, \\
&\partial_t(hur)+\partial_x\left(hu^2r+\frac{1}{2}gh^2r\right)+\partial_y(hvr)=-ghr\partial_xZ, \\
&\partial_t(hvr)+\partial_x(huvr)+\partial_y\left(hv^2r+\frac{1}{2}gh^2r\right)=-ghr\partial_yZ, \\
&\partial_t(hc_i)+\partial_x(huc_i)+\partial_y(hvc_i)=0.
\end{aligned}
\right.
\end{equation}

To describe the reformulated coupled system better, we define the free surface elevation as \( \eta = h + z \), and rewrite the above system of equations in vector form:

\begin{equation}\label{eq:4}\partial_tU+\partial_xF(U)+\partial_yG(U)=S(U),\end{equation}
where $U=(\eta,p_1,p_2,p_3,q_1,q_2,...,q_N )^T$ , \( p_1 := hr \), \( p_2 := hur \),  \( p_3 := hvr \) and \( q_i := hc_i \)(\( i=1,\dots,N \)). Then, the flux term can be expressed as:
\begin{equation}\begin{gathered}\label{eq:5}
		F(U)=\left(\frac{(\eta-Z)p_2}{p_1},p_2,\frac{p_2^2}{p_1}+\frac{1}{2}g(\eta-Z)p_1,\frac{p_2p_3}{p_1},\frac{q_1p_2}{p_1},\frac{q_2p_2}{p_1},...,\frac{q_Np_2}{p_1}\right)^T, \\
		G(U)=\left(\frac{(\eta-Z)p_3}{p_1},p_3,\frac{p_2p_3}{p_1},\frac{p_3^2}{p_1}+\frac{1}{2}g(\eta-Z)p_1,\frac{q_1p_3}{p_1},\frac{q_2p_3}{p_1},...,\frac{q_Np_3}{p_1}\right)^T.
\end{gathered}\end{equation}

The source term is expressed as follows:
\begin{equation}\label{eq:6}S(U)=\left(0,0,-gp_1\partial_xZ,-gp_1\partial_yZ,0,0,...,0\right)^T.\end{equation}

\section{Fully discrete DG scheme}\label{sec:fully}

In this paper, let $\left\{I_i=(x_{i-1/2},x_{i+1/2})\right\}_{i=1}^{M_x}$ and  $\left\{J_j=(y_{j-1/2},y_{j+1/2})\right\}_{j=1}^{M_y}$ denote the uniform partitions of \((x_{min},x_{max})\) and \((y_{min},y_{max})\), respectively. and let \(\Delta x=x_{i+1/2}-x_{i-1/2}\) and \(\Delta y=y_{j+1/2}-y_{j-1/2}\) represent the lengths of \( I_i \) and \( J_j \), respectively.
\(\left\{C_{ij}=I_i\times J_j,\forall i,j \right\}\)is the uniform partition of space region \( (x_{min},x_{max}) \times (y_{min},y_{max}) \). Then we define a discrete finite dimensional function space to approximate \( U \) as follows:
\begin{equation}\label{eq:7}
	W_h=W_h^k=\left\{v:v\mid_{C_{ij}}\in\left[P^k(C_{ij})\right]^{N+4},\forall i,j\right\},
\end{equation}
where\( \left[ P^k(C_{ij}) \right]^{N+4} = \left\{V=(v_1,v_2,...,v_{N+4})^T:v_l\in P^k(C_{ij}),l=1,...,N+4\right\} \). \( P^k(C_{ij}) \) denotes a polynomial space defined on rectangle \( C_{ij} \) with degree of polynomial at most k.

Let \(\{t_n\}_{n=0}^{N_t}\) be a partition of the time domain \( (0, T) \), and suppose that we know the numerical solution of \( U \) at time \( t = t_n \) denoted by
\begin{equation}\label{eq:8}U_h^n=\left(\eta^n,(p_1)^n,(p_2)^n,(p_3)^n,(q_1)^n,(q_2)^n,...,(q_N)^n\right)^T\in W_h,\end{equation}
what we need to do is to find the numerical solution
\begin{equation}\label{eq:9}U_h^{n+1}=\left(\eta^{n+1},(p_1)^{n+1},(p_2)^{n+1},(p_3)^{n+1},(q_1)^{n+1},(q_2)^{n+1},...,(q_N)^{n+1}\right)^T\in W_h,\end{equation}
at \( t = t_{n+1} = t_n + \Delta t_n \).

We use the standard DG method for space discretization and the forward Euler method for time discretization to solve (\ref{eq:4}).That is to find \(U_h^{n+1}\in W_{h}^k\), such that \(\forall V\in W_{h}^k\),
\begin{equation}\begin{aligned}\label{eq:10}
		\int_{C_{ij}}U_{h}^{n+1}\cdot Vdxdy & =\int_{C_{ij}}U_h^n\cdot Vdxdy \\
		& +\Delta t_n\int_{C_{ij}}F(U_h^n)\cdot V_x+G(U_h^n)\cdot V_ydxdy \\
		& -\Delta t_n\int_{J_j}\hat{F}_{i+1/2}\cdot V(x_{i+1/2}^-,y)-\hat{F}_{i-1/2}\cdot V(x_{i-1/2}^+,y)dy \\
		& -\Delta t_n\int_{I_i}\hat{G}_{j+1/2}\cdot V(x,y_{j+1/2}^-)-\hat{G}_{j-1/2}\cdot V(x,y_{j-1/2}^+)dx \\
		& +\Delta t_n\int_{C_{ij}}S(U_h^n)\cdot Vdxdy.
\end{aligned}\end{equation}

In this scheme, \(\hat{F}_{i\pm1/2}\) and \(\hat{G}_{j\pm1/2}\) are the so-called numerical fluxes which is given by the Lax-Friedrichs flux:
\begin{equation}\begin{gathered}\label{eq:11}
		\hat{F}=\hat{F}^{LF}(U^-,U^+)=\frac{1}{2}(F(U^+)+F(U^-)-\alpha_1(U^+-U^-)),\\
		\hat{G}=\hat{G}^{LF}(U^-,U^+)=\frac{1}{2}(G(U^+)+G(U^-)-\alpha_2(U^+-U^-)),
\end{gathered}\end{equation}
with \(\alpha_1=\max(|u|+\sqrt{gh})\) and \(\alpha_2=\max(|v|+\sqrt{gh})\). The maximum is taken in the neighbor cells.

\section{Well-balanced DG reconstruction scheme}\label{sec:wellb}

In this section, we propose a well-balanced DG method and demonstrate its well-balanced properties. Additionally, we derive non-trivial steady-state solutions for system (\ref{eq:3}) and employ them for validation in the numerical examples of Section \ref{sec:numer}.

\subsection{Nontrivial steady state solutions}

In this section, we present a class of nontrivial steady state solutions for the system (\ref{eq:3}). We start by considering the steady state solution of stationary water, that is:
\begin{equation}\label{eq:12} h+Z=constant,u=v=0.\end{equation}

To extend the concept of equilibrium properties using the aforementioned solutions, we substitut (\ref{eq:12}) into (\ref{eq:3}) and obtain:

\begin{equation}\label{eq:13}\begin{gathered}
		\partial_{t}(hr)=\partial_{t}(hc_{i})=0,\\ \partial_{x}(\frac{1}{2}gh^{2}r)=-ghr\partial_{x}Z,\\
		\partial_{y}(\frac{1}{2}gh^{2}r)=-ghr\partial_{y}Z.
\end{gathered}\end{equation}

The relative density of the mixture can be shown to satisfy the following condition:\(\partial_tr=\partial_xr=\partial_yr=0\).

Thus, a hydrostatic solution of the system can be derived as follows:
\begin{equation}\label{eq:14}\eta=h+Z=C,u=v=0,r=r_b,c_i=c_b,\forall i=1,2,...,N,\end{equation}
where \( C \) , \( r_b \) and \( c_b \) are constants.

In (\ref{eq:14}), all the scalar concentrations of the mixtures are constants. In fact, we notice that the partial derivatives of the mixture concentration with respect to \( x \) and \( y \) are not zero, so we can use nontrivial steady-state solutions as follows:
\begin{equation}\label{eq:15}\eta=h+Z=C,u=v=0,\end{equation}
and the concentration functions of the components of the mixture depend only on space and satisfy the following condition.

\begin{equation}\label{eq:16}\sum_{i=1}^N\Delta_ic_i(x,y)=constant,\end{equation}
where we omitted the time variable$(c_i(x,y):=c_i(x,y,t))$.

This keeps the relative density of the mixture constant, and for mixtures with \( N\ge2 \)    components, the resulting solution is spatially steady-state but with variable concentrations.

\subsection{Well-balanced DG scheme}

In this subsection, we present a well-balanced DG method for the system (\ref{eq:4}). This method is similar to the well-balanced DG scheme in \cite{Li2022}, by introducing an auxiliary parameter and then rewriting the equations equivalently, this method does not alter the definition of the numerical flux. For simplicity, in this section and the following chapters, all vectors in (\ref{eq:4}) are represented by only the first five terms, where the fifth term of \( U \) is denoted by \( q_i \), and this does not affect our subsequent proof.. Then the system (\ref{eq:4}) can be rewritten as the following form
\begin{equation}\label{eq:17}\partial_tU+\partial_x\tilde{F}(U,B)+\partial_y\tilde{G}(U,B)=\tilde{S}(U,B),\end{equation}
where
\begin{equation}\label{eq:18}
\begin{aligned}
&	\tilde{F}(U,B)=\left(\frac{(\eta-Z)p_2}{p_1},p_2,\frac{p_2^2}{p_1}+(\frac{1}{2}g\eta^2-g(\eta-B)Z)\frac{p_1}{\eta-Z},\frac{p_2p_3}{p_1},\frac{q_ip_2}{p_1}\right)^T,\\
&	\tilde{G}(U,B)=\left(\frac{(\eta-Z)p_3}{p_1},p_3,\frac{p_2p_3}{p_1},\frac{p_3^2}{p_1}+(\frac{1}{2}g\eta^2-g(\eta-B)Z)\frac{p_1}{\eta-Z},\frac{q_ip_3}{p_1}\right)^T,\\
&	\tilde{S}(U,B)=\left(0,0,g(B-\eta)Z_x\frac{p_1}{\eta-Z}+g(\frac{p_1}{\eta-Z})_xZ(B-\frac{Z}{2}), g(B-\eta)Z_y\frac{p_1}{\eta-Z}
\right.\\
&\left.
+g(\frac{p_1}{\eta-Z})_yZ(B-\frac{Z}{2}),0\right)^T.\\
\end{aligned}
\end{equation}

Here, \( B \) is an auxiliary variable that only depends on time \( t \). We observe that after introducing this auxiliary variable, for the "still-water" solution (\ref{eq:15}) in the reformulated system (\ref{eq:17}), both the source and flux terms for \( hur \) and \( hvr \) in the equations equal to zero provided that \( B = C \). We continue to discretize system (\ref{eq:17}) using the standard DG method. That is to find \(U_h^{n+1}\in W_{h}^k\), such that \(\forall V\in W_{h}^k\),

\begin{equation}\label{eq:19}\begin{aligned}
		\int_{C_{ij}}U_{h}^{n+1}\cdot Vdxdy & =\int_{C_{ij}}U_{h}^{n}\cdot Vdxdy \\
		& +\Delta t_n\int_{C_{ij}}\tilde{F}(U_h^n,B_h^n)\cdot V_x+\tilde{G}(U_h^n,B_h^n)\cdot V_ydxdy \\
		& -\Delta t_n\int_{J_j}\hat{\tilde{F}}_{i+1/2}\cdot V(x_{i+1/2}^-,y)-\hat{\tilde{F}}_{i-1/2}\cdot V(x_{i-1/2}^+,y)dy \\
		& -\Delta t_n\int_{I_i}\hat{\tilde{G}}_{j+1/2}\cdot V(x,y_{j+1/2}^-)-\hat{\tilde{G}}_{j-1/2}\cdot V(x,y_{j-1/2}^+)dx \\
		& +\Delta t_n\int_{C_{ij}}\tilde{S}(U_h^n,B_h^n)\cdot Vdxdy.
\end{aligned}\end{equation}

As in (\ref{eq:10}), the numerical fluxes \( \hat{\tilde{F}} \) and \( \hat{\tilde{G}} \) are also given by the Lax-Friedrichs flux(\ref{eq:11}):

\begin{equation}\label{eq:20}\hat{\tilde{F}}=\hat{\tilde{F}}^{LF}(U^-,U^+;B_h^n),\hat{\tilde{G}}=\hat{\tilde{G}}^{LF}(U^-,U^+;B_h^n).\end{equation}

In the scheme (\ref{eq:19}), \(B_h^n\) is a numerical approximation at time \(t_n\) of the auxiliary variable \(B\) in (\ref{eq:19}). If we design a well-balanced DG method, then for the "still-water" solution, \( B_h^n \) should be a constant, and this constant equals \( C \) in (\ref{eq:15}). Therefore, we can define \( B_h^n \) as follows:

\begin{equation}\label{eq:21}B_h^n=\frac{1}{|\Omega|}\int_\Omega\eta^ndxdy.\end{equation}

For "still-water" solutions, the initial variables \( hur \) and \( hvr \) are not constant but depend on space. If we use the \( L^2 \) projection of \( U \), the resulting approximate solution \( U_h \) becomes discontinuous at element boundaries, which would disrupt the well-balanced property of scheme (\ref{eq:19}). To address this issue, we approximate the initial variable \( U \) using piecewise \( k \)-th degree polynomial interpolation. This ensures that the approximate function \( U_h \) is continuous at element boundaries. Additionally, while \( k = 1 \) or \( k = 2 \), the interpolation nodes include all boundary points, so the values of \( U_h \) on the element boundaries match those of the initial function, thereby preserving the well-balanced property of scheme (\ref{eq:19}).

Now, we have the following theorem.

\begin{theorem}
	The proposed DG method defined in $(\ref{eq:19})$-$(\ref{eq:21})$ is a well-balanced scheme, meaning it preserves the “still-water” solution $(\ref{eq:15})$-$(\ref{eq:16})$. 
\end{theorem}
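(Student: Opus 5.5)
We will show that if $U_h^n$ is the (interpolated) still-water state, then every term on the right of (\ref{eq:19}) except $\int_{C_{ij}}U_h^n\cdot V$ vanishes componentwise. Then $U_h^{n+1}=U_h^n$ for every $V\in W_h^k$, and induction in $n$ gives the theorem.

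\emph{Step 1 (setup).} Assume $U_h^n$ is the piecewise $k$-th degree interpolant of a state satisfying (\ref{eq:15})-(\ref{eq:16}), with $Z$ replaced by its interpolant $Z_h$. By construction $U_h^n$ is continuous across cell interfaces. We then have $\eta^n\equiv C$, $p_2^n=p_3^n\equiv 0$, and, since $r$ is constant by (\ref{eq:16}), $p_1^n=r_b(\eta^n-Z_h)$ exactly within the polynomial space. This last identity holds because $p_1=r_b(C-Z)$ is interpolated at the same nodes as $Z$, so the interpolants agree. Consequently $p_1/(\eta-Z)=r_b$ is constant, and (\ref{eq:21}) gives $B_h^n=C$.

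\emph{Step 2 (componentwise vanishing).} Substituting into (\ref{eq:18}):
\begin{itemize}
\item In the $\eta$, $p_1$ and $q_i$ components the fluxes contain the factor $p_2$ or $p_3$, so they vanish, and the source terms are zero.
\item In the $p_2$ component $\tilde F$ reduces to $\bigl(\tfrac12 gC^2-g(C-C)Z\bigr)r_b=\tfrac12 gC^2r_b$, a global constant. $\tilde G$ for this component is $p_2p_3/p_1=0$.
\item In $\tilde S$ the term $g(B-\eta)Z_x\,p_1/(\eta-Z)$ vanishes because $B=\eta=C$, and $(p_1/(\eta-Z))_x=0$ because the ratio is the constant $r_b$. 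Hence $\tilde S=0$.
\item The $p_3$ component is symmetric.
\end{itemize}

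\emph{Step 3 (numerical fluxes).} Since $U^-=U^+$ at every interface by continuity, the Lax–Friedrichs flux (\ref{eq:20}) reduces to $\hat{\tilde F}=\tilde F(U^\pm)$. In the $p_2$ component the volume and boundary terms together give
\[
\tfrac12 gC^2r_b\Bigl(\int_{C_{ij}}V_x\,dxdy-\int_{J_j}\bigl[V(x_{i+1/2}^-,y)-V(x_{i-1/2}^+,y)\bigr]dy\Bigr)=0
\]
by the fundamental theorem of calculus. The $\hat{\tilde G}$ boundary terms vanish because the integrand is zero. The remaining components are all zero. Hence $U_h^{n+1}=U_h^n$.

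The main obstacle is Step 1's exactness claim. The relation $p_1/(\eta-Z)\equiv r_b$ must hold pointwise for the discrete polynomials, not just at nodes. I would secure this by using the same interpolation operator for $Z$ and $U$ and linearity of interpolation. If $p_1$ were not interpolated this way, the quotient would not be polynomial, and the $(p_1/(\eta-Z))_x$ term and the $B$-term would have to be shown to cancel instead.
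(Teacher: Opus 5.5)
Your proof is correct and follows the same route as the paper. Both argue by induction, get $B_h^n=C$ from (\ref{eq:21}), use interface continuity of the interpolated initial data to collapse the Lax--Friedrichs flux to the constant $\tfrac12 gC^2r_b$ in the momentum components, note that $\tilde S=0$, and conclude $U_h^{n+1}=U_h^n$ from the Galerkin identity. Your explicit requirement that $Z$ be interpolated with the same operator, so that $p_1/(\eta-Z)\equiv r_b$ holds pointwise, and your fundamental-theorem-of-calculus cancellation of the volume and boundary terms supply steps the paper only asserts: it simply writes $r^0=r_b$ and ``plugging in'' to reach (\ref{eq:27}).
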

\textbf{Proof.}
We consider the reformulated form (\ref{eq:17})-(\ref{eq:18}) and the new system (\ref{eq:19}) satisfied by \( U \),now we use the mathematical induction to prove \(U^{n+1}=U^n,\forall n\geq1\).

We assume that the water wave state at the initial time is a “still-water” solution in (\ref{eq:15})-(\ref{eq:16}), then we can easily get
\begin{equation}\label{eq:22}U_h^0=\left(C,(p_1)^0,0,0,(q_i)^0\right)^T,r^0=r_b,\end{equation}
where \( r_b \) is the constant defined in (\ref{eq:14}).

Suppose that (\ref{eq:22}) holds for \( n = k \), namely
\begin{equation}\label{eq:23} U_h^k=\left(C,(p_1)^0,0,0,(q_i)^0\right)^T,r^0=r_b,\end{equation}
thus we have

\begin{equation}\begin{gathered}\label{eq:24}
		U_h^k(x_{i\pm1/2}^\pm,y)=\left(C,(p_1)_{i\pm1/2,j}^{0,\pm},0,0,(q_i)_{i\pm1/2,j}^{0,\pm}\right)^T,\\
		U_h^k(x,y_{j\pm1/2}^\pm)=\left(C,(p_1)_{i,j\pm1/2}^{0,\pm},0,0,(q_i)_{i,j\pm1/2}^{0,\pm}\right)^T.
\end{gathered}\end{equation}

Depending on the choice of initial values, we conclude that \((p_1)^{0,+}=(p_1)^{0,-}\) and \((q_i)^{0,+}=(q_i)^{0,-}\).

Then, we can obtain \(U_h^k(x_{i\pm1/2}^+,y)=U_h^k(x_{i\pm1/2}^-,y)\) and \(U_h^k(x,y_{j\pm1/2}^+)=U_h^k(x,y_{j\pm1/2}^-)\).

The definition of \(B_h^k\) implies \(B_h^k=C\), thus we have from (\ref{eq:18}):
\begin{equation}\begin{aligned}\label{eq:25}
		&	\tilde{F}(U_{h}^{k},B_{h}^{k})=\left(0,0,\frac{1}{2}gC^{2}r_{b},0,0\right)^{T},\\
		&	\tilde{G}(U_{h}^{k},B_{h}^{k})=\left(0,0,0,\frac{1}{2}gC^{2}r_{b},0\right)^{T},\\
		&	\tilde{S}(U_{h}^{k},B_{h}^{k})=\left(0,0,0,0,0\right)^{T}.\\
\end{aligned}\end{equation}

Combining Eqs. (\ref{eq:11}) and (\ref{eq:24})-(\ref{eq:25}), the numerical fluxes are

\begin{equation}\begin{gathered}\label{eq:26}
		\hat{\tilde{F}}_{i\pm1/2}=\hat{\tilde{F}}^{LF}(U_h^k(x_{i\pm1/2}^-,y),U_h^k(x_{i\pm1/2}^+,y);B_h^k)=\left(0,\frac{1}{2}gC^2r_b,0\right)^T,\\
		\hat{\tilde{G}}_{j\pm1/2}=\hat{\tilde{G}}^{LF}(U_h^k(x,y_{j\pm1/2}^-),U_h^k(x,y_{j\pm1/2}^+);B_h^k)=\left(0,0,\frac{1}{2}gC^2r_b\right)^T.
\end{gathered}\end{equation}

Plugging Eqs. (\ref{eq:25})-(\ref{eq:26}) and into (\ref{eq:19}) with \( n = k \), we can get
\begin{equation}\label{eq:27}
	\int_{C_{ij}}U_h^{k+1}\cdot Vdxdy=\int_{C_{ij}}U_h^k\cdot Vdxdy,(i=1,2,...,M_x,j=1,2,...,M_y).
\end{equation}
Therefore,
\begin{equation}\label{eq:28}
	\int_{C_{ij}}(U_h^{k+1}-U_h^k)\cdot Vdxdy=0,(i=1,2,...,M_x,j=1,2,...,M_y).
\end{equation}
Set \(V=U_h^{k+1}-U_h^k\),we obtain \(U_h^{k+1}=U_h^k\).This completes the proof.$\hfill\square$

\section{Positivity-Preserving DG scheme}\label{sec:posit}

In this section, we propose a positivity-preserving DG method for shallow water models with variable density , with which the cell averages of the computed water depth and concentrations are non-negative at any discrete time \( t = t_n \) .

\subsection{The cell averages of variables \( h \) and \( q_i \)}
To obtain the element-averaged numerical solution of variables in the DG method, by setting the test function \(V=(1,1,1,1,1)^{T}\) in scheme (\ref{eq:19}), the average value \(U_{h}^{n+1}\) of \(\bar{U}^{n+1}_{h,ij}\) in each element at time \( t_{n+1} \) satisfies the following equation:

\begin{equation}\begin{aligned}\label{eq:29}
		\bar{U}_{h,ij}^{n+1} & =\bar{U}_{h,ij}^{n} \\
		& -\frac{\Delta t_{n}}{\Delta x\Delta y}\int_{J_{j}}(\hat{\tilde{F}}_{i+1/2}-\hat{\tilde{F}}_{i-1/2})dy-\frac{\Delta t_{n}}{\Delta x_{i}\Delta y}\int_{I_{i}}(\hat{\tilde{G}}_{j+1/2}-\hat{\tilde{G}}_{j-1/2})dx \\
		& +\frac{\Delta t_{n}}{\Delta x\Delta y}\int_{C_{ij}}\tilde{S}(U_{h}^{n},B_{h}^{n})dxdy.
\end{aligned}\end{equation}

Therefore, the average value \( h \) of \(\bar{h}^{n+1}_{ij}\) at time \( t_{n+1} \) can be derived as

\begin{equation}\begin{aligned}\label{eq:30}
		\bar{h}_{ij}^{n+1} & =\bar{h}_{ij}^{n} \\
		& -\frac{\Delta t_n}{\Delta x\Delta y}\int_{J_j}\left[\hat{f}\left(H_1(x_{i+1/2}^-,y),H_1(x_{i+1/2}^+,y)\right)
		\right.\\
		&\left.
		-\hat{f}\left(H_1(x_{i-1/2}^-,y),H_1(x_{i-1/2}^+,y)\right)\right]dy \\
		& -\frac{\Delta t_{n}}{\Delta x\Delta y}\int_{I_{i}}\left[\hat{g}\left(H_2(x,y_{j+1/2}^{-}),H_2(x,y_{j+1/2}^{+})\right)
		\right.\\
		&\left.
		-\hat{g}\left(H_2(x,y_{j-1/2}^{-}),H_2(x,y_{j-1/2}^{+})\right)\right]dx,
\end{aligned}\end{equation}
where
\begin{equation}\begin{aligned}\label{eq:31}
		& \hat{f}\left(H_1(x_{i\pm1/2}^{-},y),H_1(x_{i\pm1/2}^{+},y)\right)\\
		&=\frac{1}{2}\left[hu(x_{i\pm1/2}^{-},y)+hu(x_{i\pm1/2}^{+},y)
		-\alpha_{1}\left(h(x_{i\pm1/2}^{+},y)-h(x_{i\pm1/2}^{-},y)\right)\right], \\
		& \hat{g}\left(H_2(x,y_{j\pm1/2}^{-}),H_2(x,y_{j\pm1/2}^{+})\right)\\
		&=\frac{1}{2}\left[hv(x,y_{j\pm1/2}^{-})+hv(x,y_{j\pm1/2}^{+})
		-\alpha_{2}\left(h(x,y_{j\pm1/2}^{+})-h(x,y_{j\pm1/2}^{-})\right)\right].
\end{aligned}\end{equation}

Similarly, the equation for \( q_i \) can be derived through an analogous procedure, the detailed formulation of which is omitted here for conciseness.

\subsection{Sufficient conditions for positivity-preserving DG schemes}

In this section, we present sufficient conditions for two-dimensional high-order positivity-preserving DG schemes. First, we consider the one-dimensional first-order DG formulation: let \(h_{i}^{n}(i=1,2,...,M_{x})\) denote the numerical solution of water depth \( h \) in the \( i \)-th element at time \( t_n \). From Equation (\ref{eq:30})-(\ref{eq:31}), we derive the first-order DG scheme as follows:
\begin{equation}\begin{aligned}\label{eq:32}
		h_{i}^{n+1} & =h_{i}^{n}-\frac{\Delta t_{n}}{\Delta x}\left[\hat{f}(h_{i}^{n},u_{i}^{n};h_{i+1}^{n},u_{i+1}^{n})-\hat{f}(h_{i-1}^{n},u_{i-1}^{n};h_{i}^{n},u_{i}^{n})\right] \\
		& =h_i^n-\frac{1}{2}\cdot\frac{\Delta t_n}{\Delta x}\left[\left(\left(hu\right)_i^n+\left(hu\right)_{i+1}^n-\alpha(h_{i+1}^n-h_i^n)\right)
		\right.\\
		&\left.
		-\left(\left(hu\right)_{i-1}^n+\left(hu\right)_i^n-\alpha(h_i^n-h_{i-1}^n)\right)\right].
\end{aligned}\end{equation}

Therefore,we have the following lemma.
\newtheorem{lemma}{Lemma}
\begin{lemma}\label{Lm.1}
	For the first-order scheme $(\ref{eq:32})$, if the following conditions hold at time \( t_n: \)
	\begin{enumerate}
		\itemsep=0pt
		\item \(h_{i}^{n}\geq0(i=1,2,...,M_{x});\)
		\item \(C_{CFL}=\lambda_{x}\alpha\leq1;\)
	\end{enumerate} 
	where \(\lambda_{x}=\Delta t_{n}/\Delta x,\alpha=\max_{x_{min}\leq x\leq x_{max}}(\mid u\mid+\sqrt{gh})\)
	
	Then \(h_{i}^{n+1}\geq0(i=1,2,...,M_{x})\) at \( t_{n+1} \).
\end{lemma}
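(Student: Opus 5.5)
The plan is to rewrite the update (\ref{eq:32}) as a linear combination of $h_{i-1}^n$, $h_i^n$, $h_{i+1}^n$ whose coefficients are all nonnegative under the two hypotheses. To do this, I would first substitute $(hu)_l^n = h_l^n u_l^n$ for $l=i-1,i,i+1$ in (\ref{eq:32}). The terms $(hu)_i^n$ cancel between the two brackets. Collecting the rest gives
\begin{equation*}
h_i^{n+1}=\left(1-\lambda_x\alpha\right)h_i^n+\frac{\lambda_x}{2}\left(\alpha-u_{i+1}^n\right)h_{i+1}^n+\frac{\lambda_x}{2}\left(\alpha+u_{i-1}^n\right)h_{i-1}^n .
\end{equation*}
This follows because $-\tfrac12\lambda_x[\alpha(h_{i+1}^n-h_i^n)+\alpha(h_i^n-h_{i-1}^n)]$ contributes $-\lambda_x\alpha h_i^n$ and $+\tfrac12\lambda_x\alpha(h_{i+1}^n+h_{i-1}^n)$, while the flux differences contribute $-\tfrac12\lambda_x(h u)_{i+1}^n+\tfrac12\lambda_x(hu)_{i-1}^n$.

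Next I would check the sign of each coefficient.
\begin{itemize}
\item Condition 2 gives $1-\lambda_x\alpha\ge 0$.
\item Since $\alpha=\max(|u|+\sqrt{gh})\ge |u_{i\pm1}^n|$, we get $\alpha-u_{i+1}^n\ge 0$ and $\alpha+u_{i-1}^n\ge 0$.
\item $\lambda_x>0$.
\end{itemize}
Condition 1 then makes each term in the combination nonnegative, so $h_i^{n+1}\ge0$. Boundary cells are handled the same way, with ghost values given by the boundary conditions; I would assume these are also nonnegative, since the paper does not specify them.

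The only delicate point is that $u_l^n$ is defined when $h_l^n=0$. In the scheme, velocities come from $p_2/p_1$, and a dry cell has $p_1=hr=0$. I would handle this by interpreting $(hu)_l^n=0$ whenever $h_l^n=0$, and setting $u_l^n:=0$ there, which is the standard desingularization. With that convention the identity above holds, and $|u_l^n|\le\alpha$ still holds. The key step, and essentially the only one that needs care, is the bound $|u_{i\pm1}^n|\le\alpha$, which relies on $\alpha$ being taken as a maximum over all cells (including neighbors), as in the statement. Note that the $\sqrt{gh}$ part of $\alpha$ is not needed for this argument.
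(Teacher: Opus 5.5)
Your proof is correct and is the same as the paper's: the paper also rewrites (\ref{eq:32}) as $h_i^{n+1}=(1-\lambda_x\alpha)h_i^n+\frac{\lambda_x}{2}\left[(\alpha-u_{i+1}^n)h_{i+1}^n+(\alpha+u_{i-1}^n)h_{i-1}^n\right]$ and concludes from the nonnegativity of the three coefficients. Your remarks on dry cells ($u:=0$ where $h=0$) and on nonnegative ghost values at the boundary cover points the paper leaves implicit.
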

\textbf{Proof.}
From the scheme (\ref{eq:32}), we can derive the following:
\begin{equation}\begin{aligned}\label{eq:33}
		h_{i}^{n+1} &
		=\left(1-\frac{\Delta t_n}{\Delta x}\cdot\alpha\right)h_i^n+\frac{1}{2}\cdot\frac{\Delta t_n}{\Delta x}\left[\left(\alpha-u_{i+1}^n\right)h_{i+1}^n+\left(\alpha+u_{i-1}^n\right)h_{i-1}^n.\right]
\end{aligned}\end{equation}

Then we can easily get \( \alpha-u_{i+1}^{n}\geq0,\alpha+u_{i-1}^{n}\geq0 \) and \(1-\frac{\Delta t_{n}}{\Delta x}\cdot\alpha\geq0\), additionally, note that \(h_{i}^{n}\geq0,h_{i-1}^{n}\geq0,h_{i+1}^{n}\geq0\), so we obtain \(h_{i}^{n+1}\geq0\) from (\ref{eq:33}).$\hfill\square$

Next, we consider the two-dimensional \( k \)-th order DG scheme. We begin by defining the following components.

Firstly, let \(S_i^x=\{x_i^\beta:\beta=1,\cdots,L\}\) and \(S_i^y=\{y_i^\beta:\beta=1,\cdots,L\}\) be the Gaussian quadrature points on \(I_{i}=\left[x_{i-1/2},x_{i+1/2}\right]\) and \(J_{j}=\left[y_{j-1/2},y_{j+1/2}\right]\) respectively. The corresponding quadrature weights \(\omega_{\beta}(\beta=1,2,...,L)\) are distributed on the interval \(\left[-1/2,1/2\right]\), \(\sum_{\beta=1}^{L}\omega_{\beta}=1\). Then let \(\hat{S}_{i}^{x}=\{\hat{x}_{i}^{r}:r=1,\cdots,M\}\) and \(\hat{S}_{j}^{y}=\{\hat{y}_{j}^{r}:r=1,\cdots,M\}\) be the Gauss-Lobatto quadrature quadrature points on \(I_{i}=\left[x_{i-1/2},x_{i+1/2}\right]\) and \(J_{j}=\left[y_{j-1/2},y_{j+1/2}\right]\) respectively. The corresponding quadrature weights \(\hat{\omega}_r(r=1,2,...,M)\) are distributed on the interval \(\left[-1/2,1/2\right]\), \(\sum_{r=1}^{N}\hat{\omega}_{r}=1\), where \( M \) satisfies \(2M-3>k\).

Then we have the following theorem.
\begin{theorem}\label{Th.2}
	Considering the equation $(\ref{eq:30})$ for the cell-averaged water depth \( \bar{h}, \) if the following conditions hold at time \( t_n: \)
	\begin{enumerate}
		\itemsep=0pt
		\item \(\bar{h}_{ij}^n\geq0;h_{ij}^n(x_i^\beta,\hat{y}_j^r)\geq0,h_{ij}^n(\hat{x}_i^r,y_j^\beta)\geq0,\forall i,j,r,\beta;\)
		\item \(C_{CFL}=\frac{\Delta t_{n}}{\Delta x}\alpha_{1}+\frac{\Delta t_{n}}{\Delta y}\alpha_{2}\leq\hat{\omega}_{1};\)
	\end{enumerate} 
	where \(\alpha_{1}=\|\mid u\mid+\sqrt{gh}\|_{\infty}\) and  \(\alpha_{2}=\|\mid v\mid+\sqrt{gh}\|_{\infty}\).
	
	Then, we have \(\bar{h}_{ij}^{n+1}\geq0(i=1,2,...,M_{x},j=1,2,...,M_{y})\) at \( t_{n+1} \).
\end{theorem}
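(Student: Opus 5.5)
This is the standard Zhang--Shu splitting argument. The idea is to write $\bar h^{n+1}_{ij}$ as a convex combination of first-order-type updates evaluated at quadrature points, and then apply Lemma \ref{Lm.1} to each one.

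\textbf{Step 1: quadrature decomposition of the cell average.} The $L$-point Gauss rule in each direction and the $M$-point Gauss--Lobatto rule with $2M-3>k$ are exact for the polynomial $h^n_{ij}\in P^k$. Set $\lambda_1=\frac{\Delta t_n}{\Delta x}\alpha_1$, $\lambda_2=\frac{\Delta t_n}{\Delta y}\alpha_2$ and $\mu_1=\lambda_1/(\lambda_1+\lambda_2)$, $\mu_2=\lambda_2/(\lambda_1+\lambda_2)$. We then have
\begin{equation*}
\bar h^n_{ij}=\mu_1\sum_{\beta}\omega_\beta\sum_{r}\hat\omega_r\, h^n_{ij}(\hat x^r_i,y^\beta_j)+\mu_2\sum_{\beta}\omega_\beta\sum_{r}\hat\omega_r\, h^n_{ij}(x^\beta_i,\hat y^r_j).
\end{equation*}
Here $\hat x^1_i=x_{i-1/2}$ and $\hat x^M_i=x_{i+1/2}$, so the values $h^\mp$ entering the fluxes at $x_{i\pm1/2}$ appear as Lobatto endpoint values with weight $\hat\omega_1=\hat\omega_M$. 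We also replace the edge integrals in (\ref{eq:30}) by the $L$-point Gauss rule along the edges, which is what the scheme uses in practice.

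\textbf{Step 2: regroup (\ref{eq:30}) as a convex combination.} For each Gauss line $y=y^\beta_j$, collect the $x$-flux difference together with the endpoint terms $\mu_1\hat\omega_1 h(x^+_{i-1/2},y^\beta_j)$ and $\mu_1\hat\omega_M h(x^-_{i+1/2},y^\beta_j)$. This gives
\begin{equation*}
\mu_1\hat\omega_1\Big[h^+_{i-1/2}+h^-_{i+1/2}-\tfrac{\Delta t_n}{\mu_1\hat\omega_1\Delta x}\big(\hat f(h^-_{i+1/2},h^+_{i+1/2})-\hat f(h^-_{i-1/2},h^+_{i-1/2})\big)\Big].
\end{equation*}
Insert $\pm\hat f(h^+_{i-1/2},h^-_{i+1/2})$ and split this into two one-dimensional three-point first-order updates of the form (\ref{eq:32}):
\begin{equation*}
H_1=h^-_{i+1/2}-\tfrac{\Delta t_n}{\mu_1\hat\omega_1\Delta x}\big[\hat f(h^-_{i+1/2},h^+_{i+1/2})-\hat f(h^+_{i-1/2},h^-_{i+1/2})\big],
\end{equation*}
\begin{equation*}
H_2=h^+_{i-1/2}-\tfrac{\Delta t_n}{\mu_1\hat\omega_1\Delta x}\big[\hat f(h^+_{i-1/2},h^-_{i+1/2})-\hat f(h^-_{i-1/2},h^+_{i-1/2})\big].
\end{equation*}
The interior Lobatto terms, with weights $\mu_1\hat\omega_r$ for $2\le r\le M-1$, are left untouched and are nonnegative by hypothesis 1. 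The $y$-direction is handled identically with $\mu_2$, $\hat g$, and the lines $x=x^\beta_i$.

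\textbf{Step 3: apply Lemma \ref{Lm.1}.} Each $H$ is a first-order Lax--Friedrichs update with effective ratio $\Delta t_n/(\mu_1\hat\omega_1\Delta x)$. Its CFL number is
\begin{equation*}
\frac{\Delta t_n\alpha_1}{\mu_1\hat\omega_1\Delta x}=\frac{\lambda_1+\lambda_2}{\hat\omega_1}\le 1
\end{equation*}
by hypothesis 2. All its input values (edge traces at the Gauss points) are point values of the type assumed nonnegative in hypothesis 1, and they are taken from neighboring cells. Hence $H_1,H_2\ge0$, and likewise in $y$.

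This is where the main obstacle lies. Lemma \ref{Lm.1} is written with $u$ and the flux $hu$, but the water-depth component of the scheme uses the reformulated flux $(\eta-Z)p_2/p_1=hu$ and the Lax--Friedrichs jump in $\eta$ rather than in $h$. One must check that these coincide. This holds only if $Z$ is continuous across cell interfaces, which the interpolated, continuous bottom topography provides. One must also check that $\alpha_1$, being a global bound, dominates $|u|$ at every trace, so that the coefficients $(\alpha\mp u)$ in (\ref{eq:33}) stay nonnegative.

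With these checks in place, $\bar h^{n+1}_{ij}$ is a sum of nonnegative coefficients times nonnegative quantities, so $\bar h^{n+1}_{ij}\ge0$.
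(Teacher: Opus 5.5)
Your proposal is correct and follows essentially the same route as the paper. The paper also writes $\bar h^n_{ij}$ as a convex combination, with weights $\alpha_1\lambda_1/\mu$ and $\alpha_2\lambda_2/\mu$ in its notation (your $\mu_1,\mu_2$), of tensor Gauss/Gauss--Lobatto quadratures, separates the Lobatto endpoint values, inserts the fictitious flux $\hat f(h^+_{i-1/2},h^-_{i+1/2})$, and invokes Lemma~\ref{Lm.1} on one-dimensional updates with CFL number $C_{CFL}/\hat\omega_1\le 1$. Your extra caveats are points the paper passes over silently and only make the argument more careful: the edge integrals must be evaluated with the Gauss rule, and the scheme's Lax--Friedrichs jump is in $\eta$, so (\ref{eq:30})--(\ref{eq:31}) represent it only when $Z$ is single-valued at interfaces.
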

\textbf{Proof.}
Let \(\lambda_{1}=\Delta t_{n}/\Delta x, \lambda_{2}=\Delta t_{n}/\Delta y\). Then we use the Gauss-Lobatto quadrature rule, to transform (\ref{eq:30}) into the following form:

\begin{equation}\begin{aligned}\label{eq:34}
		\bar{h}_{ij}^{n+1} & =\bar{h}_{ij}^{n} \\
		& -\lambda_{1}\Bigg[\sum_{\beta=1}^{L}\hat{f}\left(H_1(x_{i+1/2}^{-},y_{j}^{\beta}),H_1(x_{i+1/2}^{+},y_{j}^{\beta})\right)
		\\
		&
			-\hat{f}\left(H_1(x_{i-1/2}^{-},y_{j}^{\beta}),H_1(x_{i-1/2}^{+},y_{j}^{\beta})\right)\Bigg]\omega_{\beta} \\
		& -\lambda_2\Bigg[\sum_{\beta=1}^L\hat{g}\left(H_2(x_i^\beta,y_{j+1/2}^-),H_2(x_i^\beta,y_{j+1/2}^+)\right)
		\\
		&
		-\hat{g}\left(H_2(x_i^\beta,y_{j-1/2}^-),H_2(x_i^\beta,y_{j-1/2}^+)\right)\Bigg]\omega_\beta.
\end{aligned}\end{equation}

Let \(\mu=\alpha_{1}\lambda_{1}+\alpha_{2}\lambda_{2}\), we have
\begin{equation}\begin{aligned}\label{eq:35}
		\bar{h}_{ij}^{n} & =\frac{\alpha_{1}\lambda_{1}}{\mu}\bar{h}_{ij}^{n}+\frac{\alpha_{2}\lambda_{2}}{\mu}\bar{h}_{ij}^{n} \\
		& =\frac{\alpha_{1}\lambda_{1}}{\mu}\sum_{r=1}^{N}\sum_{\beta=1}^{L}\omega_{\beta}\hat{\omega}_{r}h_{ij}(\hat{x}_{i}^{r},y_{j}^{\beta})+\frac{\alpha_{2}\lambda_{2}}{\mu}\sum_{r=1}^{N}\sum_{\beta=1}^{L}\omega_{\beta}\hat{\omega}_{r}h_{ij}(x_{i}^{\beta},\hat{y}_{j}^{r}) \\
		& =\frac{\alpha_{1}\lambda_{1}}{\mu}\sum_{r=2}^{N-1}\sum_{\beta=1}^{L}\omega_{\beta}\hat{\omega}_{r}h_{ij}(\hat{x}_{i}^{r},y_{j}^{\beta})
		\\
		&
		+\frac{\alpha_{1}\lambda_{1}}{\mu}\hat{\omega}_{1}\sum_{\beta=1}^{L}\omega_{\beta}\left(h_{ij}(x_{i-1/2}^{+},y_{j}^{\beta})+h_{ij}(x_{i+1/2}^{-},y_{j}^{\beta})\right) \\
		& +\frac{\alpha_{2}\lambda_{2}}{\mu}\sum_{r=2}^{N-1}\sum_{\beta=1}^{L}\omega_{\beta}\hat{\omega}_{r}h_{ij}(x_{i}^{\beta},\hat{y}_{j}^{r})
		\\
		&
		+\frac{\alpha_{2}\lambda_{2}}{\mu}\hat{\omega}_{1}\sum_{\beta=1}^{L}\omega_{\beta}\left(h_{ij}(x_{i}^{\beta},y_{j-1/2}^{+})+h_{ij}(x_{i}^{\beta},y_{j+1/2}^{-})\right).
\end{aligned}\end{equation}

Here we have used \( \hat{\omega}_1 = \hat{\omega}_N \). Substituting (\ref{eq:35}) into (\ref{eq:34}) yields

\begin{equation}\begin{aligned}\label{eq:36}
		\bar{h}_{ij}^{n+1} & =\frac{\alpha_{1}\lambda_{1}}{\mu}\sum_{r=2}^{N-1}\sum_{\beta=1}^{L}\omega_{\beta}\hat{\omega}_{r}h_{ij}(\hat{x}_{i}^{r},y_{j}^{\beta})+\frac{\alpha_{2}\lambda_{2}}{\mu}\sum_{r=2}^{N-1}\sum_{\beta=1}^{L}\omega_{\beta}\hat{\omega}_{r}h_{ij}(x_{i}^{\beta},\hat{y}_{j}^{r}) \\
		& +\frac{\alpha_{1}\lambda_{1}}{\mu}\hat{\omega}_{1}\sum_{\beta=1}^{L}\omega_{\beta}\Bigg[h_{ij}(x_{i+1/2}^{-},y_{j}^{\beta})
		\\
		&
		-\frac{\mu}{\alpha_{1}\hat{\omega}_{1}}\bigg[\hat{f}\left(H_1(x_{i+1/2}^{-},y_{j}^{\beta}),H_1(x_{i+1/2}^{+},y_{j}^{\beta})\right)  \\
		& 
		-\hat{f}\left(H_1(x_{i-1/2}^+,y_j^\beta),H_1(x_{i+1/2}^-,y_j^{\beta})\right)\bigg]\Bigg] \\
		& +\frac{\alpha_{1}\lambda_{1}}{\mu}\hat{\omega}_{1}\sum_{\beta=1}^{L}\omega_{\beta}\Bigg[h_{ij}(x_{i-1/2}^{+},y_{j}^{\beta})
		\\
		&
		-\frac{\mu}{\alpha_{1}\hat{\omega}_{1}}\bigg[\hat{f}\left(H_1(x_{i-1/2}^{+},y_{j}^{\beta}),H_1(x_{i+1/2}^{-},y_{j}^{\beta})\right) \\
		&  -\hat{f}\left(H_1(x_{i-1/2}^{-},y_{j}^{\beta}),H_1(x_{i-1/2}^{+},y_{j}^{\beta})\right)\bigg]\Bigg] \\
		& +\frac{\alpha_{2}\lambda_{2}}{\mu}\hat{\omega}_{1}\sum_{\beta=1}^{L}\omega_{\beta}\Bigg[h_{ij}(x_{i}^{\beta},y_{j+1/2}^{-})\\
		&
		-\frac{\mu}{\alpha_{2}\hat{\omega}_{1}}\bigg[\hat{g}\left(H_2(x_{i}^{\beta},y_{j+1/2}^{-}),H_2(x_{i}^{\beta},y_{j+1/2}^{+})\right)\\
		& -\hat{g}\left(H_2(x_{i}^{\beta},y_{j-1/2}^{+}),H_2(x_{i}^{\beta},y_{j+1/2}^{-})\right)\bigg]\Bigg]\\
		&
		+\frac{\alpha_{2}\lambda_{2}}{\mu}\hat{\omega}_{1}\sum_{\beta=1}^{L}\omega_{\beta}\Bigg[h_{ij}(x_{i}^{\beta},y_{j-1/2}^{+})\\
		&
		-\frac{\mu}{\alpha_{2}\hat{\omega}_{1}}\bigg[\hat{g}\left(H_2(x_{i}^{\beta},y_{j-1/2}^{+}),H_2(x_{i}^{\beta},y_{j+1/2}^{-})\right) \\
		& -\hat{g}\left(H_2(x_{i}^{\beta},y_{j-1/2}^{-}),H_2(x_{i}^{\beta},y_{j-1/2}^{+})\right)\bigg]\Bigg].
\end{aligned}\end{equation}

By Lemma \ref{Lm.1}, we conclude that \(\bar{h}_{ij}^{n+1}\geq0\). This completes the proof.$\hfill\square$

Similar to the theorem above, analogous results for \((\bar{q_i})_{ij}^{n+1}\geq0\) can be established, with the detailed derivations omitted here.

\subsection{Positivity-Preserving limiter}
Next, under the assumption \(\bar{h}_{ij}^n\geq0\), \(\forall i,j\), we will give a positivity-preserving limiter which modifies the DG solution polynomials \(h_h\) at time tn into \(\tilde{h}_h\) such that they satisfy the sufficient condition in Theorem \ref{Th.2}, while maintaining accuracy and local conservation.

At time \( t_n \), the numerical solution \( h^n \) is modified on each grid cell \( C_{ij} \) using the following limiter \cite{Xing2010}:

\begin{equation}\label{eq:37}\tilde{h}_{ij}^n(x,y)=\delta_j\left(h_{ij}^n(x,y)-\bar{h}_{ij}^n\right)+\bar{h}_{ij}^n,\end{equation}
where 

\begin{equation}\begin{gathered}\label{eq:38}
		\delta_{j}=\min\left\{1,\frac{\bar{h}_{ij}^{n}}{\bar{h}_{ij}^{n}-m_{ij}}\right\},m_{ij}=\min_{(x,y)\in S_{ij}}h_{ij}^{n}(x,y),\\
		S_{ij}=\left\{(x,y):x\in S_{i}^{x},y\in\hat{S}_{j}^{y},or x\in\hat{S}_{i}^{x},y\in S_{j}^{y}\right\}.
\end{gathered}\end{equation}

Similarly, the same correction procedure is applied to the numerical solution \( (q_i)^n \).

\subsection{Positivity-Preserving Well-Balanced DG Methods}
By applying the positivity-preserving limiter (\ref{eq:37}) to modify \( U_h^n \) in (\ref{eq:19}), we obtain the revised solution denoted as \(\tilde{U}_h^n=\left(\tilde{\eta}^n,(p_1)^n,(p_2)^n,(p_3)^n,(\tilde{q_i})^n\right)^T\). 
Then the well-balanced DG scheme (\ref{eq:19}) transforms into: find \(U_h^{n+1}\in W_{h}^k\), such that \(\forall V\in W_{h}^k\),

\begin{equation}\begin{aligned}\label{eq:39}
		\int_{C_{ij}}U_{h}^{n+1}\cdot Vdxdy & =\int_{C_{ij}}\tilde{U}_{h}^{n}\cdot Vdxdy \\
		& +\Delta t_{n}\int_{C_{ij}}\tilde{F}(\tilde{U}_{h}^{n},B_{h}^{n})\cdot V_{x}+\tilde{G}(\tilde{U}_{h}^{n},B_{h}^{n})\cdot V_{y}dxdy \\
		& -\Delta t_{n}\int_{J_j}\hat{\tilde{F}}_{i+1/2}\cdot V(x_{i+1/2}^{-},y)-\hat{\tilde{F}}_{i-1/2}\cdot V(x_{i-1/2}^{+},y)dy \\
		& -\Delta t_{n}\int_{I_i}\hat{\tilde{G}}_{j+1/2}\cdot V(x,y_{j+1/2}^{-})-\hat{\tilde{G}}_{j-1/2}\cdot V(x,y_{j-1/2}^{+})dx \\
		& +\Delta t_{n}\int_{C_{ij}}\tilde{S}(\tilde{U}_{h}^{n},B_{h}^{n})\cdot Vdxdy.
\end{aligned}\end{equation}

Under the premise that all terms in the scheme maintain the same computational methodology as previously defined. The selection of an appropriate CFL number complying with the CFL condition in Theorem \ref{Th.2} enables the positivity-preserving limiter to guarantee non-negative values of the numerical solutions \( h^n \) and \( (q_i)^n \) at quadrature nodes. This establishes scheme (\ref{eq:39}) as a positivity-preserving and well-balanced DG method.

\section{Numerical tests}\label{sec:numer}

This section demonstrates the applicability of the proposed numerical scheme through test cases simulating coupled shallow water flow an
d solute transport. The finite-dimensional function space is constructed using a \( P^k \) approximation with \( k = 1 \) or \( 2 \), while time discretization employs a third-order TVD Runge-Kutta scheme instead of the forward Euler method. The method is formulated as a convex combination of the forward Euler method, thereby effectively preserving the balance property and non-negativity of the numerical scheme. In all numerical examples, we used the gravitational acceleration \( g = 1 \). The grid employs constant spatial steps \( \Delta x \) and \( \Delta y \) in the \( x \)- and \( y \)- directions respectively, with temporal step sizes determined as follows:

\begin{equation}\label{eq:40}\Delta t_n=C_{CFL}\frac{\min(\Delta x,\Delta y)}{\max\left(\left|u\right|+\sqrt{gh}\right)+\max\left(\left|v\right|+\sqrt{gh}\right)}.\end{equation}

To suppress numerical oscillations, a total variation diminishing (TVD) minmod slope limiter \cite{Cockburn1998-2} is employed in the computation.


\subsection{Small perturbation of a “still-water” state}

This section adopts the shallow water equations to validate the stability of the proposed numerical scheme and its ability to accurately capture the propagation of small perturbations from a steady-state solution. The computational domain in this test is \([0,2]\times[0,1]\). The test is conducted by introducing a small disturbance to a still-water equilibrium over a variable bottom topography with an exponential profile, where the bottom topography is given by

\begin{equation}\label{eq:41}Z(x,y)=0.8\exp\left(-5(x-0.9)^2-50(y-0.5)^2\right).\end{equation}

The initial condition for the water depth is as follows:

\begin{equation}\label{eq:42}\eta(0,x,y)=
	\begin{cases}
		1.01 & 0.05\leq x\leq0.15, \\
		1 & \mathrm{otherwise} .
\end{cases}\end{equation}

In this experiment, the vertically averaged horizontal velocity in the \( x- \) and \( y- \) directions are equal to zero. We set the wall boundary conditions in the \( x- \) direction while the outflow boundary conditions are imposed in the \( y- \) direction.

The DG method in (\ref{eq:39}) is used to solve this problem on a $200 \times 50$ mesh with \( CFL=0.3 \). Figure \ref{FIG:1} presents 2D and 3D contour plots of the water surface elevation at different times (\( t \) = 0.5, 0.9, 1.3 and 1.7). The results exhibit no numerical oscillations and clearly demonstrate the capability of the proposed scheme to accurately capture the dynamic evolution of small lake perturbations under “still-water” state, and
the results are very consistent with the results in \cite{Hanini2021}.
\begin{figure}
	\centering
	\includegraphics[width=.65\columnwidth]{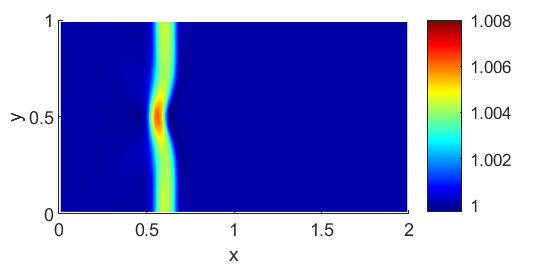}
	\includegraphics[width=.65\columnwidth]{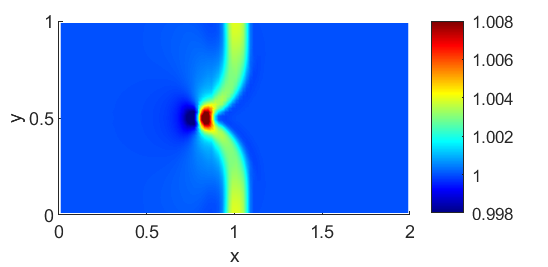}
	\includegraphics[width=.65\columnwidth]{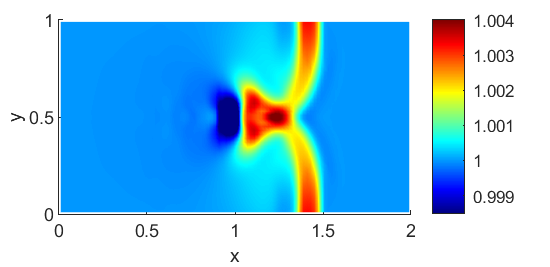}
	\includegraphics[width=.65\columnwidth]{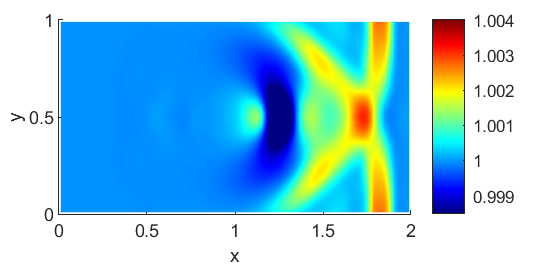}
	\caption{Two-dimensional views of the computed water surface elevation using the proposed schemes at times \( t = 0.5, 0.9, 1.3 \) and \(1.7 \).}
	\label{FIG:1}
\end{figure}

\begin{figure}
	\centering
	\includegraphics[width=0.35\columnwidth]{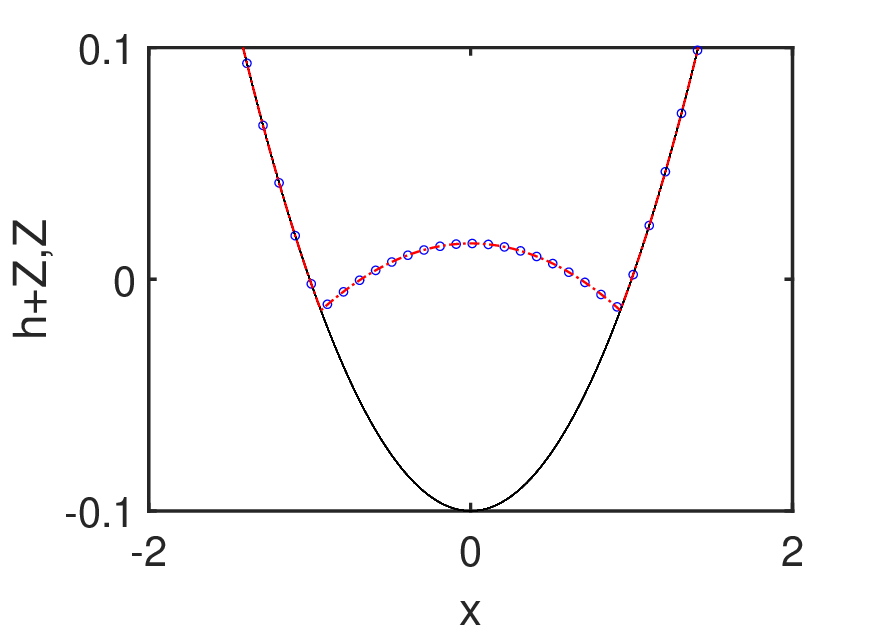}
	\includegraphics[width=0.35\columnwidth]{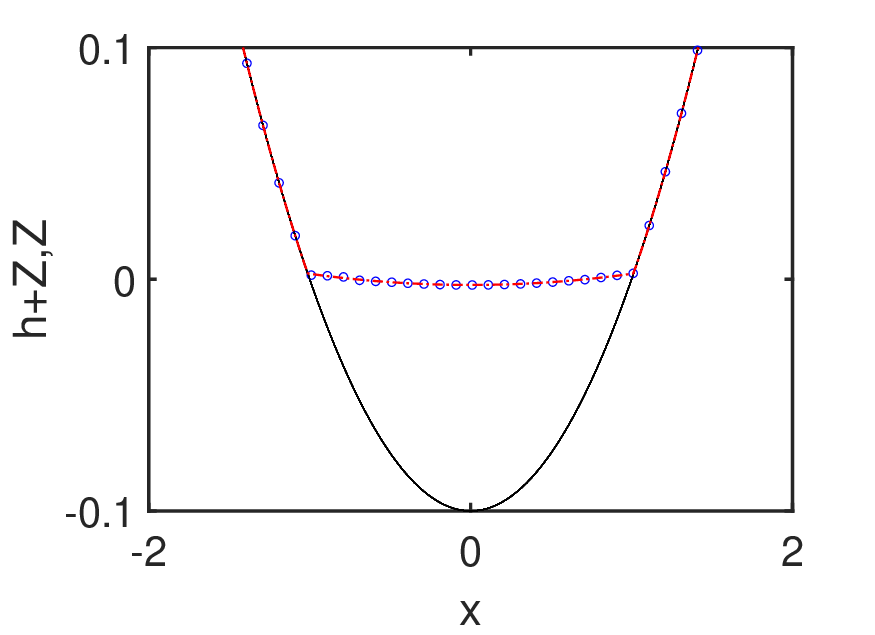}
	\includegraphics[width=0.35\columnwidth]{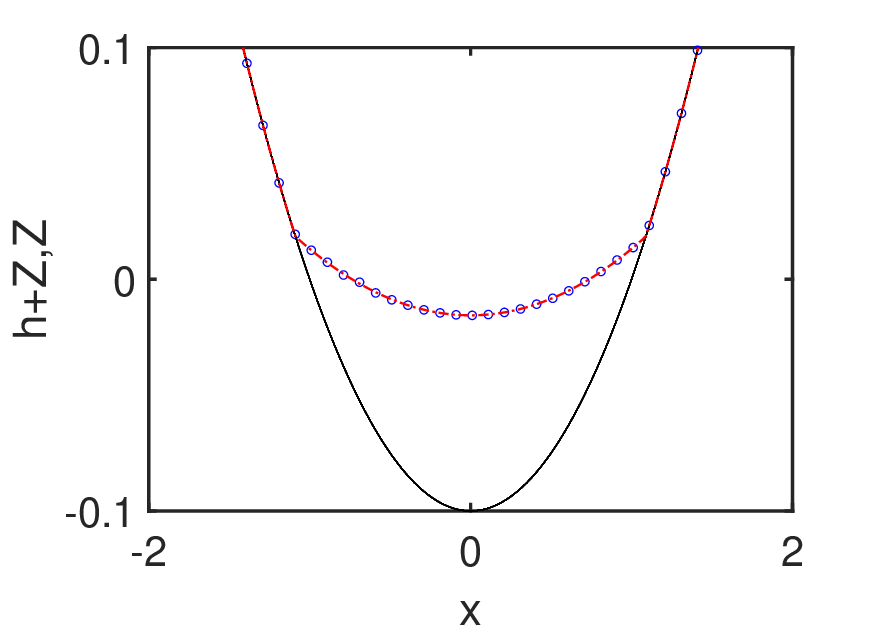}
	\includegraphics[width=0.35\columnwidth]{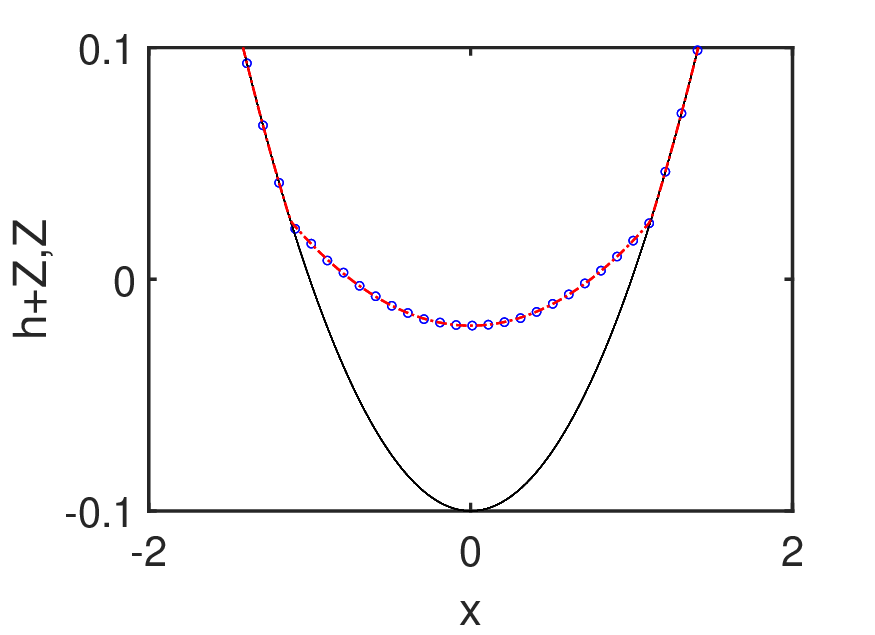}
	\includegraphics[width=0.35\columnwidth]{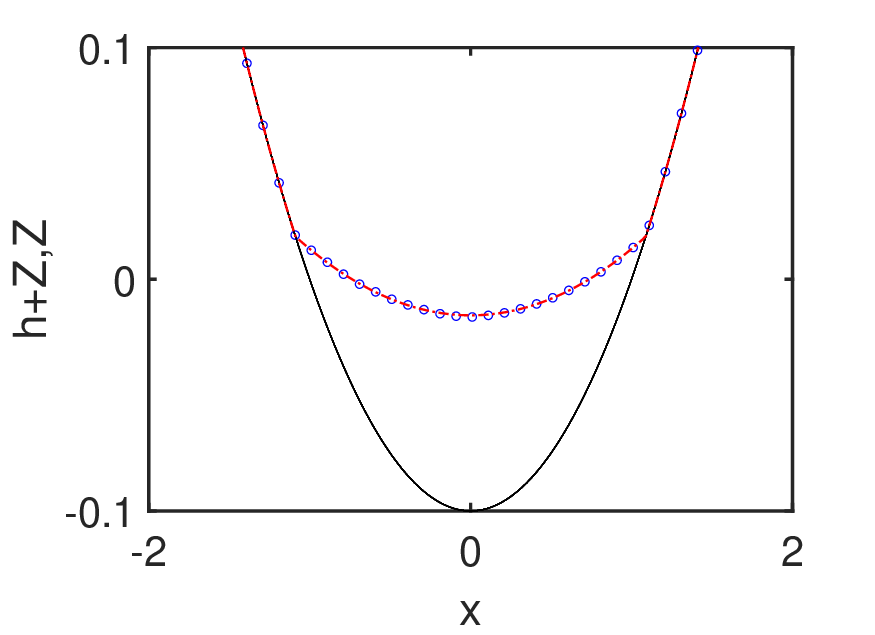}
	\includegraphics[width=0.35\columnwidth]{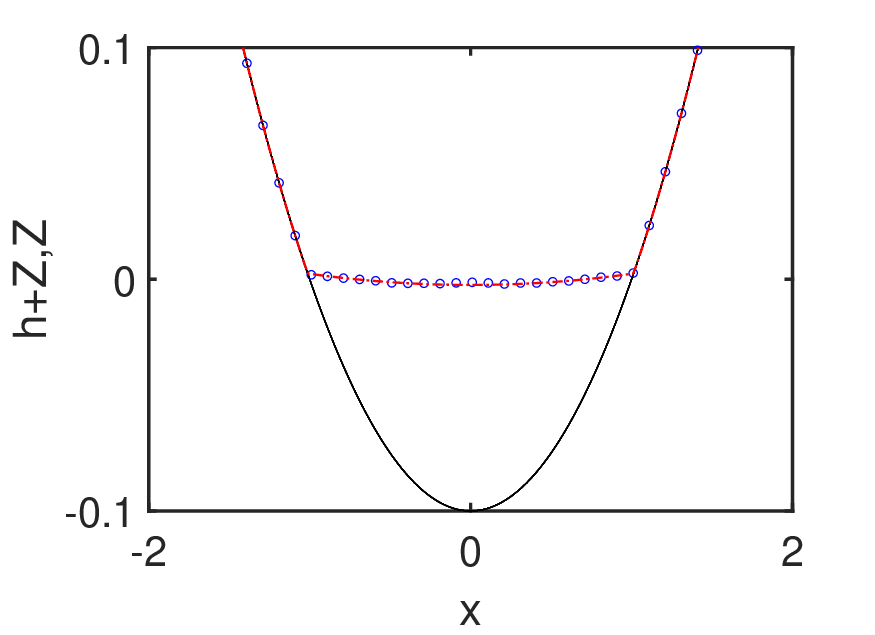}
	\includegraphics[width=0.35\columnwidth]{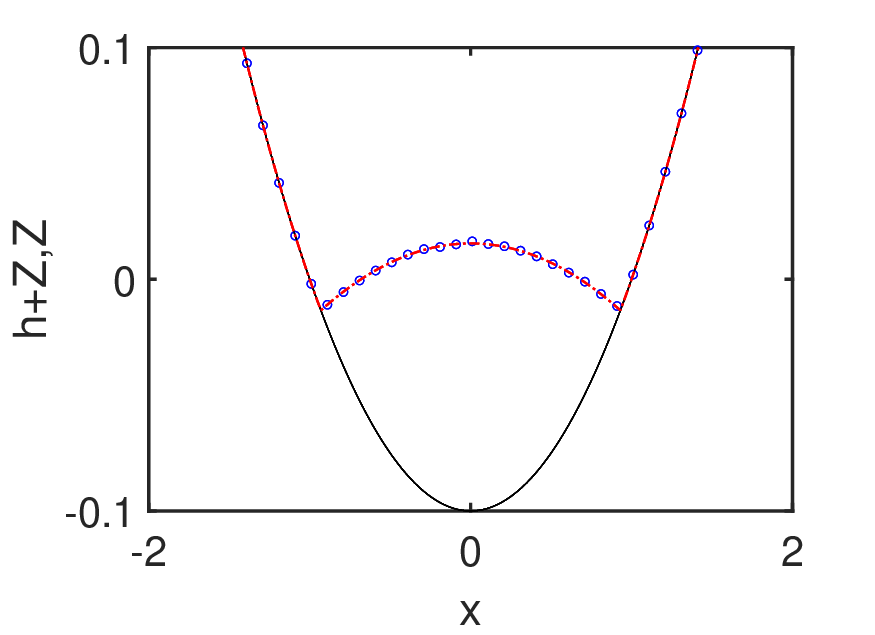}
	\includegraphics[width=0.35\columnwidth]{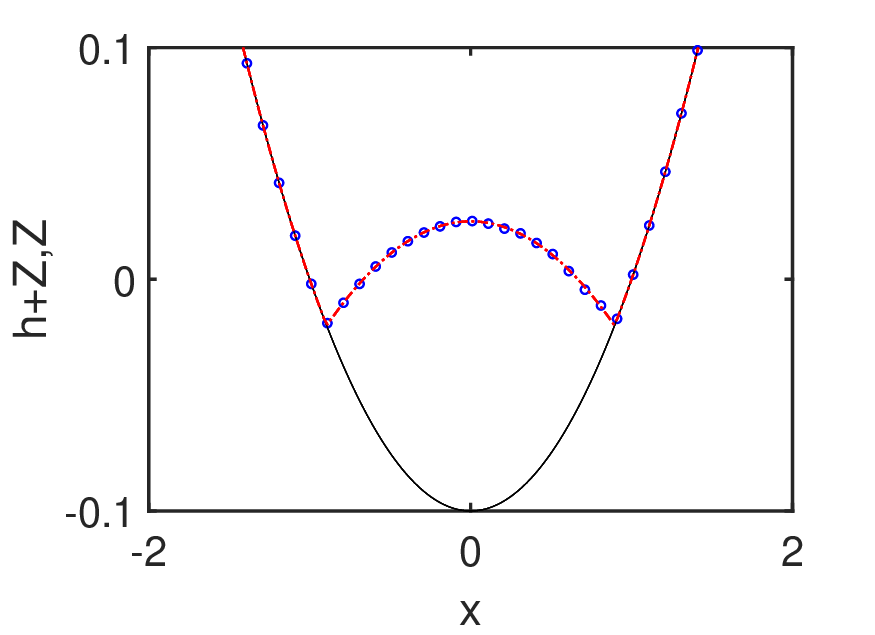}
	\caption{Vertical cross-section of the water surface level \( h + Z \) at \( y = 0 \) in a parabolic bowl at \( t =T/8, T/4, 3T/8, T/2, 5T/8, 3T/4, 7T/8 \) and \( T \) (from left to right and from top to bottom). Circles numerical solution; dashed line exact water surface; solid line domain geometry.}
	\label{FIG:2}
\end{figure}

\subsection{Parabolic Bowl}

This section adopts the shallow water equations to validate the positivity-preserving property of the proposed numerical scheme. We study the oscillating water surface in a two-dimensional parabolic bowl embedded within the square region \([-2,2]\times[-2,2]\). The parabolic geometry is defined by the following equation:

\begin{equation}\label{eq:43}Z(x,y)=h_0\left(\frac{x^2+y^2}{a^2}-1\right),\end{equation}
where \( h_0 \) and \( a \) are undetermined constants. The exact solution of the two-dimensional shallow water equations is given by
\begin{equation}\begin{aligned}\label{eq:44}
		& \eta(x,y,t)=\max\left\{Z,h_{0}\left[\frac{\sqrt{1-A^{2}}}{1-A\cos(\omega t)}-1-\frac{x^{2}+y^{2}}{a^{2}}\left(\frac{1-A^{2}}{(1-A\cos(\omega t))^{2}}-1\right)\right]\right\}, \\
		& u(x,y,t)=\frac{1}{1-A\cos(\omega t)}\left(\frac{1}{2}\omega x\sin(\omega t)\right),\quad\mathrm{if}\quad h>0, \\
		& v(x,y,t)=\frac{1}{1-A\cos(\omega t)}\left(\frac{1}{2}\omega y\sin(\omega t)\right),\quad\mathrm{if}\quad h>0,
\end{aligned}\end{equation}
where \(\eta=h+Z,\omega=\sqrt{8gh_{0}}/a,A=(a^{2}-r_{0}^{2})/(a^{2}+r_{0}^{2})\), and \( r_0 \) is a given constant indicating the initial radial distance from the center at time \( t = 0 \) \cite{Marche2007}.

In this numerical simulation, the computational domain \( [-2,2]\times[-2,2] \) is discretized into \( 200\times200 \) uniform cells. The exact solution (\ref{eq:44}) is applied as the initial condition at \( t = 0 \), with Dirichlet boundary conditions imposed along the domain edges. The constants are chosen as \( h_0 = 0.1 \), \( a = 1.0 \), and \( r_0 = 0.8 \). The method (\ref{eq:39}) is used to solve this problem. We plot the water surface level at \( t = T/8, T/4, 3T/8, T/2, 5T/8, 3T/4, 7T/8\) and \( T \) in Figure \ref{FIG:2}, where \(T=2\pi/\omega\) is the oscillation period. The numerical results show excellent agreement with the analytical solution, and the water depth remains strictly positive throughout the computation. In contrast, simulations without the positivity-preserving scheme would generate negative water depth values, leading to numerical instability or failure.

\begin{figure}
	\centering
	\includegraphics[width=.45\columnwidth]{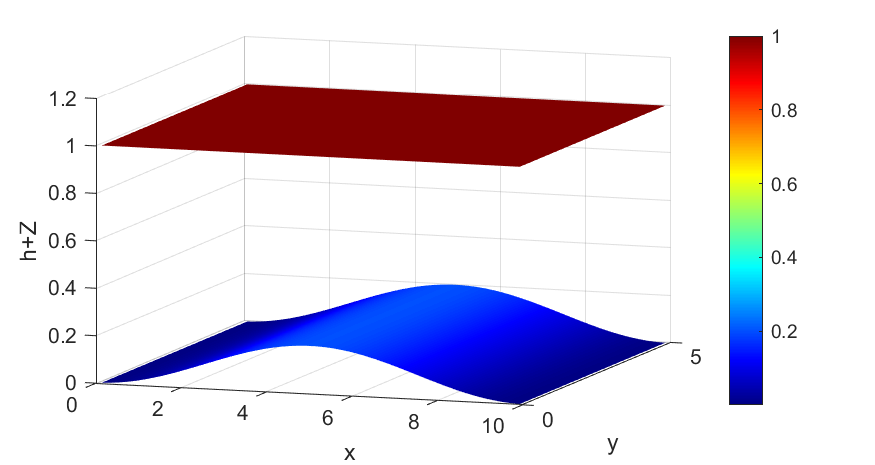}
	\includegraphics[width=.45\columnwidth]{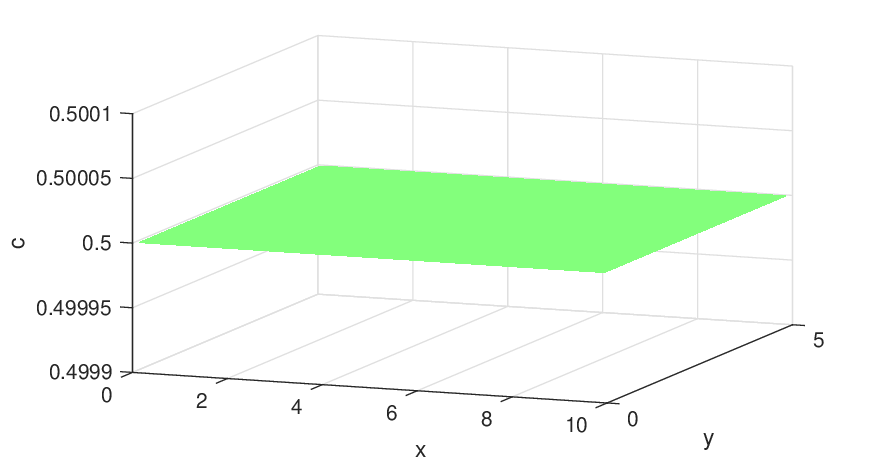}
	\caption{Three-dimensional view of the numerical solution at time \( t = 50 \). Left: The computed water surface elevation and the bottom topography. Right: The computed volumetric concentration.}
	\label{FIG:3}
\end{figure}

\subsection{Equilibrium preservation test for multi-component mixtures}

This section focuses on validating the well-balanced property of the proposed numerical scheme. In all test cases, we use the analytical solution (\ref{eq:15})-(\ref{eq:16}) derived in Section \ref{sec:wellb}. All test cases adopt a computational domain of \([0,10]\times[0,5]\) with the bottom topography defined as:

\begin{equation}\label{eq:45}Z(x,y)=0.1\left(1-\cos(2\pi x/10)\right).\end{equation}

In this section, we continue to employ a rectangular grid for numerical simulations, with  \( CFL=0.2 \).

\begin{table}
	\caption{\( L^1 \) error of the computed concentrations of the two constituents and the
		water surface elevation at time \( t = 50 \).}\label{tbl1}
\centering
	\begin{tabular}{c c c}
&&\\
\hline
		\(N_{x}\times N_{y}\) & \(E(c_{1})\) & \(E(c_{2})\) \\
\hline
		\(80\times40\) & \(1.3694\cdotp10^{-5}\) & \(9.7340\cdotp10^{-6}\) \\
		\(160\times40\) & \(3.8814\cdotp10^{-6}\) & \(2.7251\cdotp10^{-6}\)  \\
		\(320\times40\) & \(1.0431\cdotp10^{-6}\) & \(6.9773\cdotp10^{-7}\)  \\
		\(640\times40\) & \(2.8545\cdotp10^{-7}\) & \(1.5655\cdotp10^{-7}\)  \\
\hline
	\end{tabular}
	\label{TAL:1}
\end{table}

\begin{table}
	\caption{\( L^1 \) error of the computed concentrations of the four constituents and the
		water surface elevation at time \( t = 50 \).}\label{tbl1}
\centering
	\begin{tabular}{c c c c c}
&&&&\\
\hline
		\(N_{x}\times N_{y}\) & \(E(c_{1})\) & \(E(c_{2})\) & \(E(c_{3})\) & \(E(c_{4})\)\\
\hline
		\(80\times40\) & \(1.7203\cdotp10^{-5}\) & \(1.2240\cdotp10^{-5}\) & \(1.8877\cdotp10^{-6}\) & \(1.3962\cdotp10^{-6}\)\\
		\(160\times40\) & \(4.3205\cdotp10^{-6}\) & \(3.0382\cdotp10^{-6}\) & \(4.5464\cdotp10^{-7}\) & \(3.7260\cdotp10^{-7}\)\\
		\(320\times40\) & \(1.0983\cdotp10^{-6}\) & \(7.3661\cdotp10^{-7}\) & \(9.5674\cdotp10^{-8}\) & \(1.1619\cdotp10^{-7}\)\\
		\(640\times40\) & \(2.9260\cdotp10^{-7}\) & \(1.6115\cdotp10^{-7}\) & \(5.8892\cdotp10^{-9}\) & \(5.2061\cdotp10^{-8}\)\\
\hline
	\end{tabular}
	\label{TAL:2}
\end{table}

\begin{figure}
	\centering
	\includegraphics[width=.45\columnwidth]{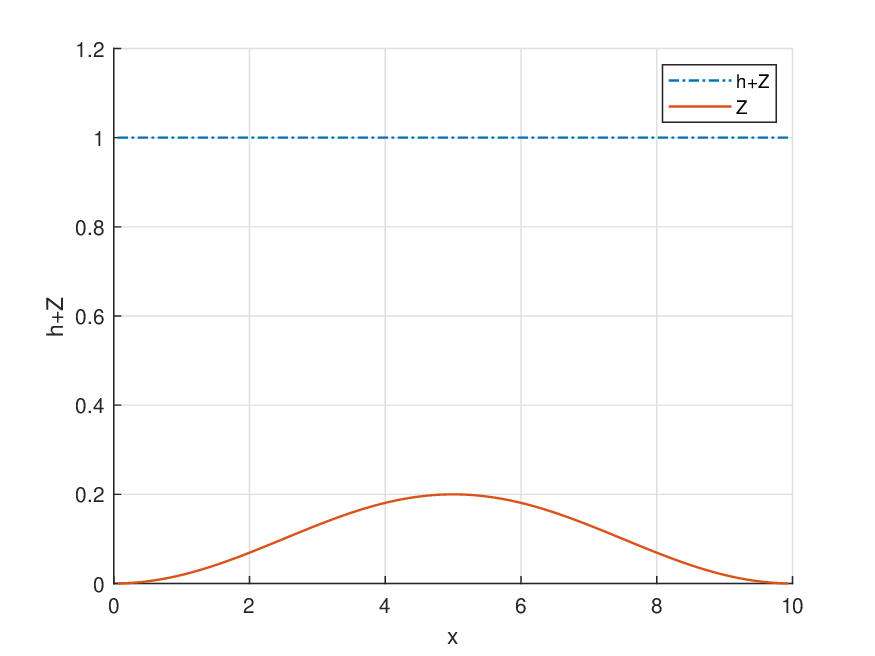}
	\includegraphics[width=.45\columnwidth]{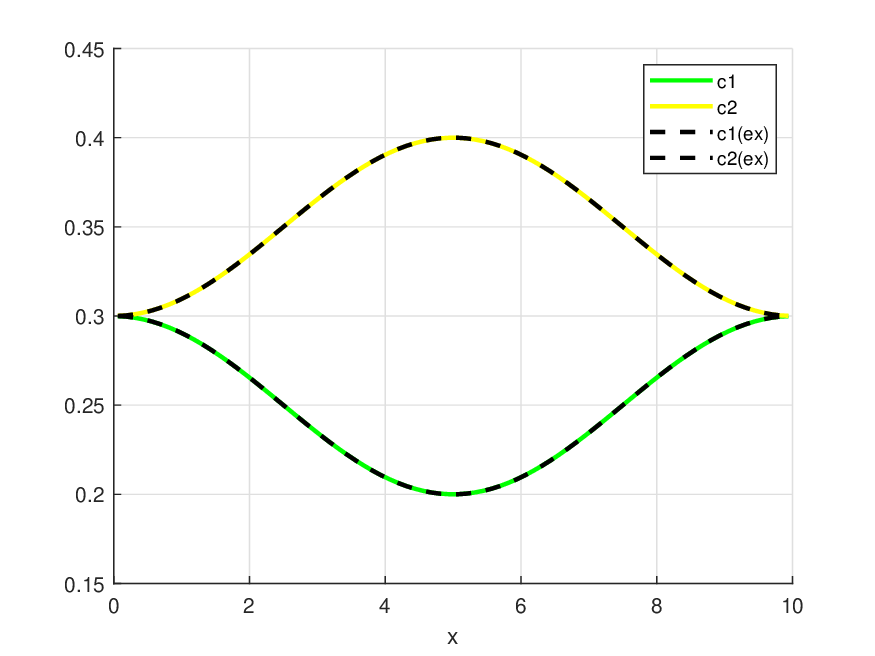}
	\caption{Cross sections at \( y = 2.5 \) of the computed volumetric concentrations for solutions with two constituents at time \( t = 50 \). Left: The computed water surface elevation and the bottom topography. Right: The computed volumetric concentration with two constituents.}
	\label{FIG:4}
\end{figure}

\begin{figure}
	\centering
	\includegraphics[width=.45\columnwidth]{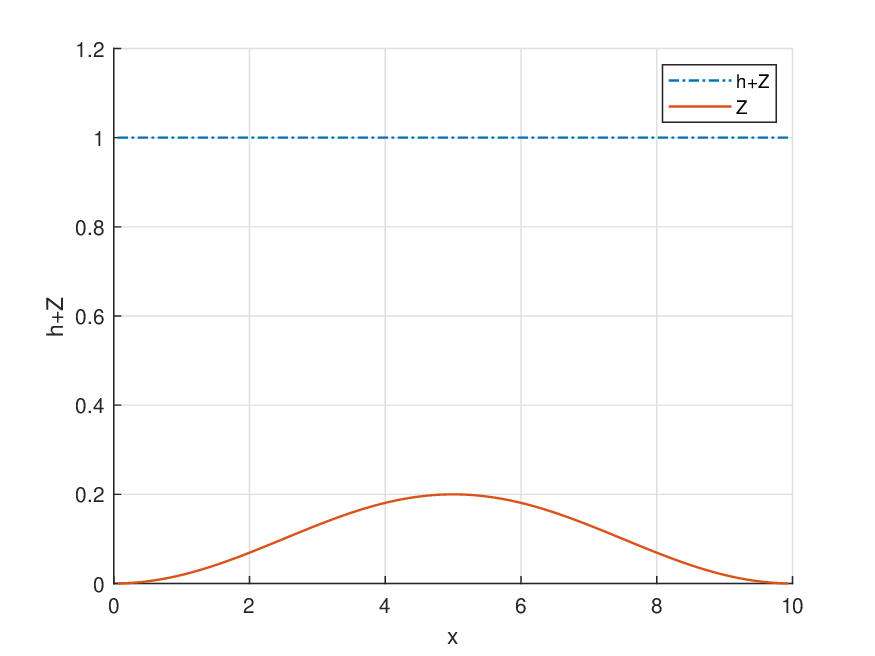}
	\includegraphics[width=.45\columnwidth]{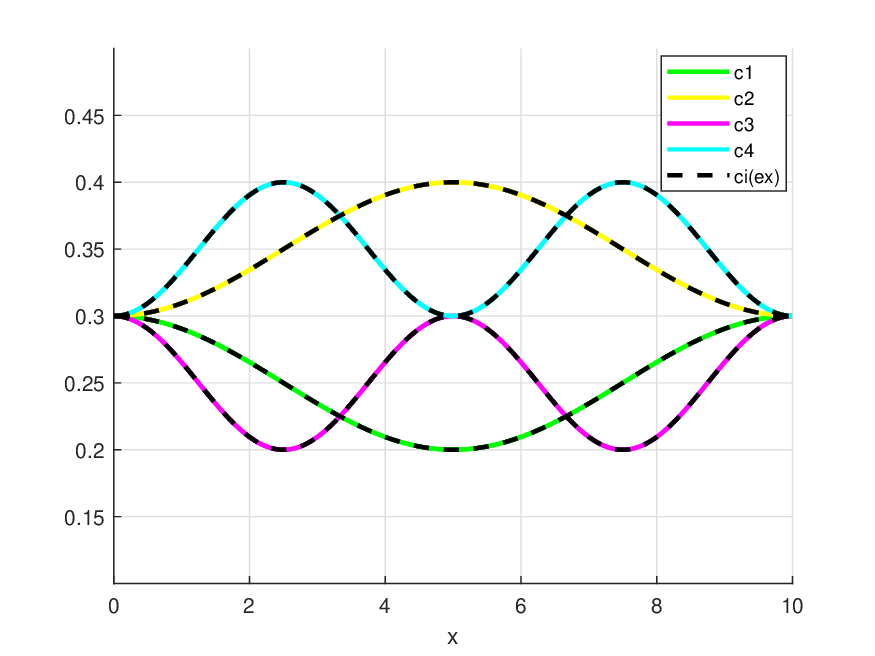}
	\caption{Cross sections at \( y = 2.5 \) of the computed volumetric concentrations for solutions with four constituents at time \( t = 50 \). Left: The computed water surface elevation and the bottom topography. Right: The computed volumetric concentration with four constituents.}
	\label{FIG:5}
\end{figure}

\subsubsection{Equilibrium of mono-constituent mixture}

This subsection aims to validate the equilibrium characteristics in a single-component mixing system. The initial conditions are set as follows: \( \eta = 1 \), \( u = v = 0 \), \( c_1 = 0.5 \), \( \Delta_1 = 0.2 \), resulting in an initial density of \( r = 1.1 \). Wall boundary conditions are applied on all computational domain boundaries. Figure \ref{FIG:3} shows the numerical solutions of the water surface elevation and mixture concentration at \( t = 50 \) obtained using the proposed scheme. As shown in the results, no numerical oscillations are observed in either the water surface elevation or the concentration field, with both quantities maintained at machine-level accuracy (approximately \( 10^{-15} \)). The numerical solutions remain stable throughout the computation, thereby validating the well-balanced property of the proposed numerical scheme.

\subsubsection{Equilibrium of multi-component mixture}

This subsection aims to validate the equilibrium characteristics in a multi-component mixing system. In this section, we presents equilibrium tests for systems containing two and four mixture components.

For the system containing two mixture components, the initial conditions are set as \( \eta = 1 \), \( u = v = 0 \), with the initial concentrations given by

\begin{equation}c_1(t,x,y)=0.2+0.1\cos^2(\pi x/10),\quad c_2(t,x,y)=0.3+0.1\sin^2(\pi x/10),\end{equation}
with relative densities \( \Delta_1 = \Delta_2 = 0.2 \).

Wall boundary conditions are applied to all domain boundaries. Figure \ref{FIG:4} displays the numerical solutions of the water surface elevation and concentration distributions at \( t = 50 \). As shown in the results, the water surface elevation exhibits no oscillations, and the numerical solutions for the two mixture concentrations show excellent agreement with the analytical solutions. To quantitatively assess the accuracy of the solutions, the \( L_1 \) error is defined as follows:

\begin{equation}E(u)=\frac{\left\|u^{ex}-u^{num}\right\|_{L_1}}{\left\|u^{ex}\right\|_{L_1}},\end{equation}
where \(\|u\|_{L_{1}}=\sum_{i,j}|C_{ij}||u_{ij}|\), \( u^{ex} \) is the exact solution and \( u^{num} \) is the approximate one. Table \ref{TAL:1} summarizes the \( L^1 \) error norms for both water surface elevation and scalar concentration fields in the benchmark tests. The table demonstrates that the error decreases with mesh refinement, confirming the convergence property of the proposed numerical scheme.

For the system containing four mixture components, the initial concentrations are specified as follows:
\begin{equation}\begin{gathered}
		c_{1}(t,x,y)=0.2+0.1\cos^{2}\left(\pi x/10\right),\quad c_{2}(t,x,y)=0.3+0.1\sin^{2}\left(\pi x/10\right),\\c_{3}(t,x,y)=0.2+0.1\cos^{2}\left(\pi x/20\right),\quad c_{4}(t,x,y)=0.3+0.1\sin^{2}\left(\pi x/20\right),
\end{gathered}\end{equation}
with relative densities \( \Delta_i = 0.2 \) for all mixture constituents \( i \) = 1, 2, 3, 4. In this test case, the water surface elevation is set to \( \eta = 1 \) and the vector field velocity is zero. This leads to a nontrivial steady state solution which satisfies  (\ref{eq:16}).

Wall boundary conditions are applied throughout the entire computational domain. Figure \ref{FIG:5} presents the water surface elevation and concentration distributions at \( t = 50 \). As shown in the results, the water surface elevation remains free of oscillations, and the numerical solutions for the four mixture concentrations show excellent agreement with the analytical solutions. Table \ref{TAL:2} provides the \( L_1 \) errors of the numerical solutions for the four mixture concentrations on different grids.

\begin{figure}
	\centering
	\includegraphics[width=.45\columnwidth]{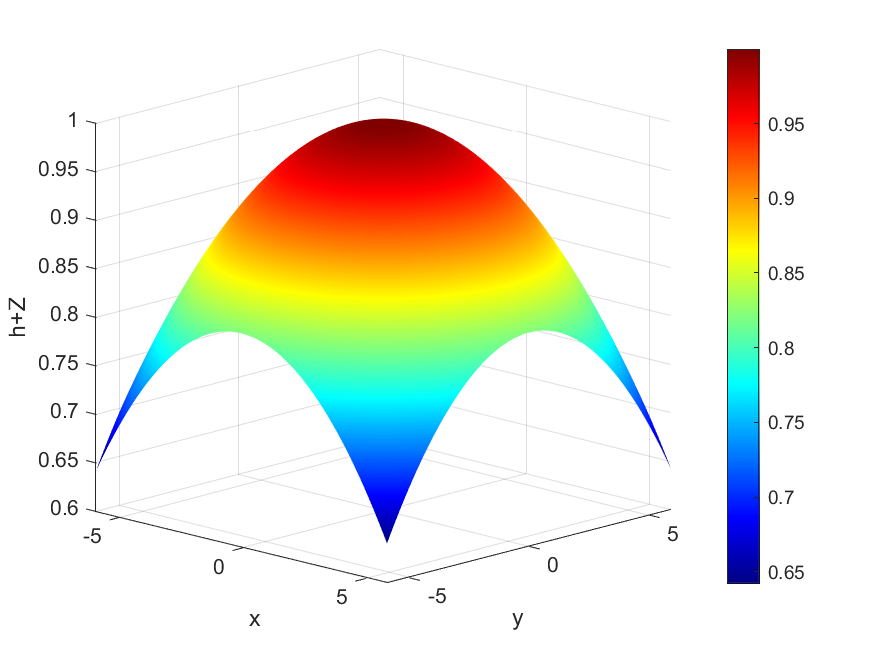}
	\includegraphics[width=.45\columnwidth]{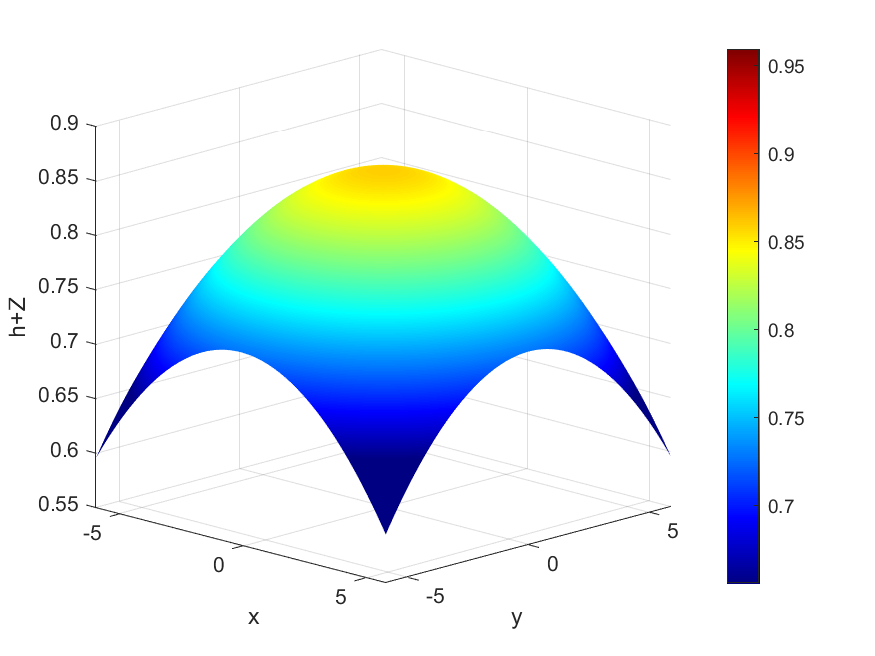}
	\caption{Three-dimensional views of the water surface elevation. Left: The initial condition. Right: The computed solution at time \( t = 4 \).}
	\label{FIG:6}
\end{figure}

\begin{figure}
	\centering
	\includegraphics[width=.45\columnwidth]{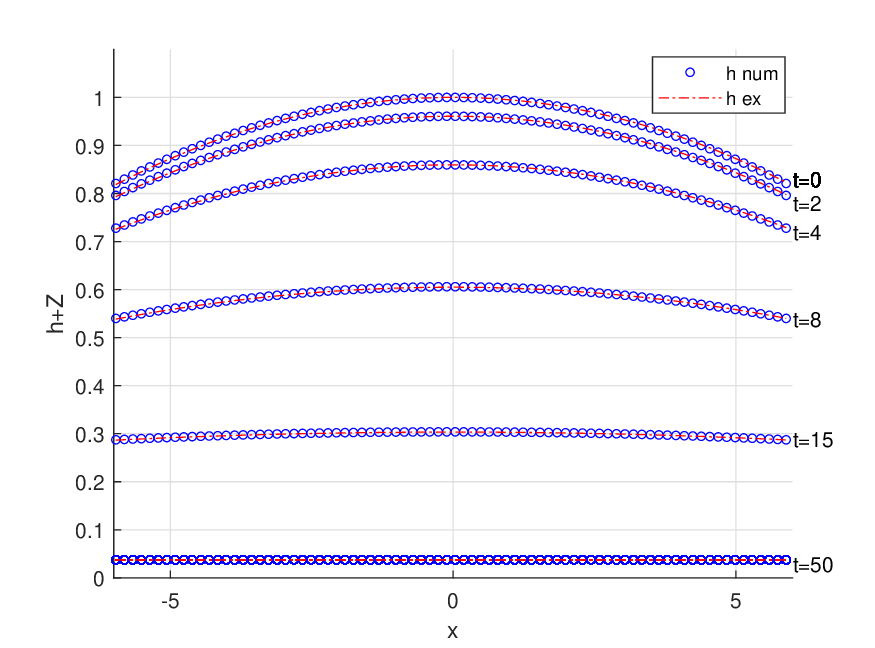}
	\includegraphics[width=.45\columnwidth]{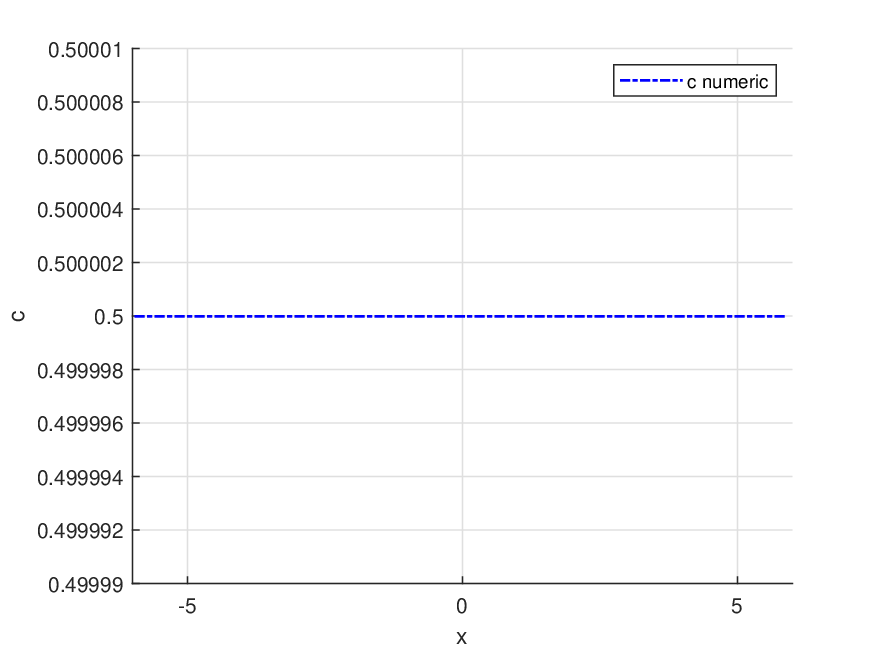}
	\caption{Cross section at \( y = 0 \) of the numerical solutions. Left: The computed and analytical water depth at different times. Circles numerical solution; dashed line exact water surface. Right: The computed scalar concentration at time \( t = 50 \).}
	\label{FIG:7}
\end{figure}

\begin{figure}
	\centering
	\includegraphics[width=.65\columnwidth]{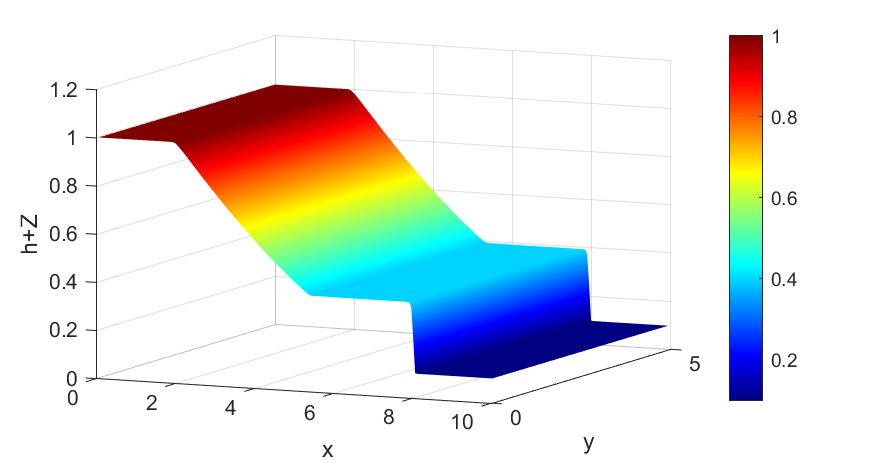}
	\includegraphics[width=.65\columnwidth]{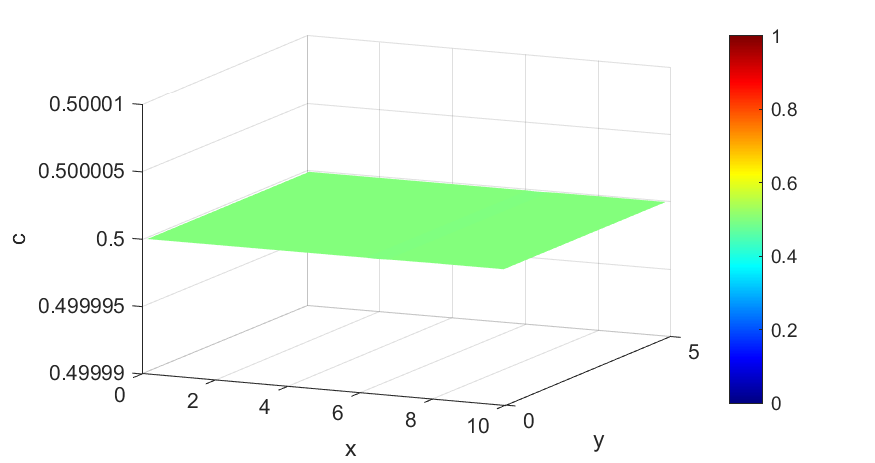}
	\caption{Dam-break with scalar transport for constant concentration. The computed solutions at time \( t = 3 \).}
	\label{FIG:8}
\end{figure}

\subsection{Parabolic flood waves with mono-constituent mixture}

In this section, we introduces a modified version of the Thacker analytical solution \cite{Carlisle1981}, which used to simulate the motion of parabolic flood waves on a frictionless planar seabed topography. The purpose is to verify the preservation of the constant concentration state in the computational process and the consistency of the discretization of the transport equation and the continuity equation. In the numerical examples of this section, we adopt the following analytical solution:
\begin{equation}\begin{aligned}
		&q_{1}(t,x,y)=h_{0}r\left[\frac{T^{2}}{t^{2}+T^{2}}-\frac{x^{2}+y^{2}}{R_{0}^{2}}\left(\frac{T^{2}}{t^{2}+T^{2}}\right)^{2}\right],\\
		&	q_{2}(t,x,y)=\frac{xt}{t^{2}+T^{2}}q_{1}(t,x,y),\\
		&	q_{3}(t,x,y)=\frac{yt}{t^{2}+T^{2}}q_{1}(t,x,y),
\end{aligned}\end{equation}
where \( c(t, x, y) = c_0 \) is a constant, \( h_0 \) is the initial peak height of the paraboloidal water surface, \( R_0 \) is the initial radius of the paraboloidal mound [45], and \( T \) is the reference time given by \( T = R_0 / \sqrt{2gh_0} \).

The proposed numerical scheme is used to solve system (\ref{eq:3}). The initial conditions: \( h = h_0 \left(1 - (x^2 + y^2)/R_0^2\right) \), \( u = v = 0 \), \( c = 0.5 \), and \( \Delta = 0.2 \), yielding \( r = 1.1 \). The parameters are configured as \( h_0 = 1 \) and \( R_0 = 14 \). The computational domain \( [-6, 6] \times [-6, 6] \) is discretized into \( 80 \times 80 \) uniform cells with a CFL condition of 0.2. The analytical solution serves as the boundary condition applied in the numerical test.

Figure \ref{FIG:6} illustrates the initial condition of the flood wave and the three-dimensional view of the water depth at \( t = 4 \). The numerical solution for the water depth maintains symmetry without oscillations, and the \( L_1 \) error is on the order of \( 1.6\cdot 10^{-3} \).

Figure \ref{FIG:7} presents the evolution of the computed and exact solutions for water depth at different time instances, along with the numerical results for mixture concentrations. At all simulated time steps, the numerical solutions for water depth show excellent agreement with the analytical solutions while maintaining constant concentrations. This confirms the satisfactory consistency in the discretization treatment between the transport equation and the continuity equation.

\subsection{Dam-break with scalar transport}

This section simulates the rapidly changing flow field during a dam-break process, aiming to investigate the consistency between the transport equation and the continuity equation at the discrete level under rapidly varying flow conditions. The computational domain \([0,10]\times[0,5]\) is divided into \(200\times50\) uniform elements.

We consider the following initial condition:
\begin{equation}h(0,x,y)=
	\begin{cases}
		1 & x\leq5 ,\\
		0.1 & \mathrm{otherwise}, 
\end{cases}\end{equation}
and \( u(0,x,y)=v(0,x,y)=0 \), \( c(0,x,y)=0.5 \), the outflow boundary conditions are imposed in the \( x \)- and \( y \)- directions except for the variable \( p_3 \), in the \( y \)- direction, for which the Dirichlet boundary condition \( p_3 = 0 \) is imposed \( (i.e, v = 0) \).

As illustrated in Figure {\ref{FIG:8}}, the computed water surface elevation agrees well with the solution in \cite{Hanini2021}, and the concentration remains constant. This verifies the consistency between the continuity equation and the transport equation at the discrete level under rapidly changing flow conditions.

\section{Conclusions}\label{sec:concl}

In this paper, we propose a robust numerical scheme for simulating coupled systems of variable-topography shallow water flow and scalar transport. Through a series of numerical experiments, we validate the well-balanced property of the proposed scheme, confirming its capabilities in preserving the positivity of both water depth and scalar concentration, while demonstrating the consistency in discretization between the continuity equation and the transport equation. This method is designed based on structured grids, consequently, it currently addresses shallow water wave problems only in simple domains for the two-dimensional case. Future work will focus on extending the method to complex geometries by developing numerical algorithms utilizing unstructured grids.

\section*{Acknowledgments}
The research of M.Li was supported in part by the NSFC 12271082 62231016.



\begin{thebibliography}{99}

\bibitem{Luis2012} C. Luis, M. E. Vázquez-Cendón. Unstructured finite volume discretisation of bed friction and convective flux in solute transport models linked to the
shallow water equations. J Comput Phys, 231(8), 3317-3339 (2012).

\bibitem{Faranak2018} B. Faranak, S. Behrouz, C. Newman James. Solution of fully-coupled shallow water equations and contaminant transport using a primitive-variable Riemann method. Environ Fluid Mech, 18(2), 515-535 (2018).

\bibitem{Hu2020} D. Hu, S. Yao, C. Duan, S. Li. Real-time simulation of hydrodynamic and scalar transport in large river-lake systems. J Hydrol, 582, 124531 (2020).

\bibitem{Pablo2018} O. Pablo, F. Bruno, V. Nicolo, A. Athanasious, S. Thorsten, G. Carlo. Instantaneous transport of a passive scalar in a turbulent separated flow. Environ Fluid Mech , 18(2), 487-513 (2018).

\bibitem{Vázquez-Cendón1999} M. E. Vázquez-Cendón. Improved treatment of source terms in upwind
schemes for the shallow water equations in channels with irregular geometry. J Comput Phys,  148(2), 497-526 (1999).

\bibitem{Thierry2003} G. Thierry, H. Jean-Marc, S. Nicolas. Some approximate Godunov
schemes to compute shallow-water equations with topography. Comput $\&$ Fluids, 32(4), 479-513 (2003).

\bibitem{Abdelaziz2016}B. Abdelaziz, M. Abdolmajid, K. Alexander. Well-balanced positivity preserving cell-vertex central-upwind scheme for shallow water flows. Comput $\&$ Fluids, 136, 193-206 (2016).

\bibitem{Khorshid2017}S. Khorshid, A. Mohammadian, I. Nistor. Extension of a well-balanced central upwind scheme for variable density shallow water flow equations on triangular grids. Comput $\&$ Fluids, 156, 441-448 (2017).

\bibitem{Guerrero Fernández2020}E. Guerrero Fernández, M. J. Castro-Díaz, T. Morales de Luna. A Second-Order Well-Balanced Finite Volume Scheme for the Multilayer Shallow Water Model with Variable Density. Mathematics, 8(5), 848 (2020).

\bibitem{Hanini2021}A. Hanini, A. Beljadid, D. Ouazar. A well-balanced positivity-preserving numerical scheme for shallow water models with variable density. Comput $\&$ Fluids, 231, 105156 (2021).

\bibitem{Ghazizadeh2022}M.A Ghazizadeh, A. Mohammadian. An adaptive central-upwind scheme on quadtree grids for variable density shallow water equations. Int J Numer Meth Fluids, 94, 461-481 (2022).

\bibitem{Reed1973}W. H. Reed and T.R. Hill. Triangular mesh methods for the neutron transport equation. Tech. Report LA-UR-73-479, Los Alamos Scientifific Laboratory (1973).

\bibitem{Cockburn1990}B. Cockburn, S. Hou and C.-W. Shu. The Runge-Kutta local projection discontinuous Galerkin fifinite element method for conservation laws IV: the multidimensional case. Mathematics of Computation, 54, 545-581 (1990).

\bibitem{Cockburn1989-1}B. Cockburn, S. Y. Lin and C.-W. Shu. TVB Runge-Kutta local projection discontinuous Galerkin fifinite element method for conservation laws III: one dimensional systems. Journal of Computational Physics, 84, 90-113 (1989).

\bibitem{Cockburn1989-2}B. Cockburn and C.-W. Shu. TVB Runge-Kutta local projection discontinuous Galerkin fifinite element method for conservation laws II: general framework. Mathematics of Computation, 52, 411-435 (1989).

\bibitem{Cockburn1991}B. Cockburn and C.-W. Shu. The Runge-Kutta local projection P1-discontinuousGalerkin fifinite element method for scalar conservation laws. Mathematical Modelling and Numerical Analysis (M2AN), 25, 337-361 (1991).

\bibitem{Cockburn1998-1}B. Cockburn and C.-W. Shu. The Runge-Kutta discontinuous Galerkin method for conservation laws V: multidimensional systems. Journal of Computational Physics, 141, 199-224 (1998).

\bibitem{Li2014}M. Li, P. Guyenne, F. Li, L. Xu. High order well-balanced CDG-FE methods for shallow water waves by a Green-Naghdi mode. Journal of Computational Physics, 257, 169-192 (2014).

\bibitem{Li2017}M. Li, P. Guyenne, F. Li, L. Xu. A Positivity-Preserving Well-Balanced Central Discontinuous Galerkin Method for the Nonlinear Shallow Water Equations. J Sci Comput, 71, 994-1034 (2017).

\bibitem{Guerrero Fernández2022}E. Guerrero Fernández, M. J. Castro-Díaz, M. Dumbser, T. Morales de Luna. An Arbitrary High Order Well-Balanced ADER-DG Numerical Scheme for the Multilayer Shallow-Water Model with Variable Density. Journal of Scientific Computing, 90, 52 (2022).

\bibitem{Li2022}M. Li, R. Mu, H. Dong. A well-balanced discontinuous Galerkin method for the shallow water flows on erodible bottom. Computers and Mathematics with Applications, 119, 13-20 (2022).

\bibitem{Zhang2023}J. Zhang, Y. Xia, Y. Xu. Moving Water Equilibria Preserving Discontinuous Galerkin Method for the Shallow Water Equations. Journal of Scientific Computing, 95, 48 (2023).

\bibitem{Ersing2024}P. Ersing, A. R. Winters. An Entropy Stable Discontinuous Galerkin Method for the Two-Layer Shallow Water Equations on Curvilinear Meshes. Journal of Scientific Computing, 98, 62 (2024).

\bibitem{Bunya2009}S. Bunya, E. J. Kubatko, J. J. Westerink, C. Dawson. A wetting and drying treatment for the Runge-Kutta discontinuous Galerkin solution to the shallow water equations. Comput. Meth. Appl. Mech. Eng, 198, 1548-1562 (2009).

\bibitem{Xing2010}Y. Xing, X. Zhang, C.-W. Shu. Positivity-preserving high order well-balanced discontinuous Galerkin methods for the shallow water equations. Advances in Water Resources, 33, 1476-1493 (2010).

\bibitem{Xian2021}W. Xian, A. Chen, Y. Cheng, H. Dong, Numerical simulations for shallow water flows	over erodible bed by central DG methods, Int. J. Numer. Anal. Model, 18(2), 143-164 (2021).

\bibitem{Javier2012}M. Javier, L. Borja, García-Navarro Pilar. A Riemann solver for
unsteady computation of 2D shallow flows with variable density. J Comput Phys, 231(14), 4775-4807 (2012).

\bibitem{Cockburn1998-2}B. Cockburn, C.-W. Shu, The Runge-Kutta discontinuous Galerkin method for conservation laws V multidimensional systems, J. Comput. Phys, 141, 199-224 (1998).

\bibitem{Marche2007}F. Marche, P. Bonneton, P. Fabrie, N. Seguin. Evaluation of well-balanced bore-capturing schemes for 2D wetting and drying processes. Int. J. Numer. Methods Fluids 53, 867-894 (2007).

\bibitem{Carlisle1981}T. W. Carlisle. Some exact solutions to the nonlinear shallow-water wave equations. J Fluid Mech, 107, 499-508 (1981).

\end{thebibliography}
\end{document}